\newcommand{\deriv}{\mathfrak d}
\newcommand{\derv}{\mathfrak D}
\newcommand{\nucleo}{\mathbf R}
\newcommand{\Nuc}{\mathcal O}
\newcommand{\Gb}{\mathbf G}
\newcommand{\verma}{M}
\numberwithin{equation}{section}
\newtheorem{theorem}{Theorem}[section]
\newtheorem{lemma}[theorem]{Lemma}
\newtheorem{coro}[theorem]{Corollary}
\newtheorem{prop}[theorem]{Proposition}
\theoremstyle{definition}
\newtheorem{definition}[theorem]{Definition}
\theoremstyle{remark}
\newtheorem{remark}[theorem]{Remark}
\newcommand{\pf}{\begin{proof}}
\newcommand{\epf}{\end{proof}}
\newcommand{\spl}{\mathfrak{sl}}
\newcommand{\ku}{ \Bbbk}
\newcommand{\kut}{ \ku^{\times}}
\newcommand{\I}{\mathbb I}
\newcommand{\N}{\mathbb N}
\newcommand{\Z}{\mathbb Z}
\newcommand{\tx}{\mathtt{x}}
\newcommand{\cO}{\mathcal{O}}
\newcommand{\cF}{\mathcal{F}}
\newcommand{\cC}{\mathcal{C}}
\newcommand{\D}{\mathcal{D}}
\newcommand{\cH}{\mathcal{H}}
\newcommand{\cI}{\mathcal{I}}
\newcommand{\Pc}{{\mathcal P}}
\newcommand{\Ss}{{\mathcal S}}
\newcommand{\kh}{\mathfrak h}
\newcommand{\ad}{\operatorname{ad}}
\newcommand{\Frac}{\operatorname{Frac}}
\newcommand{\car}{\operatorname{char}}
\newcommand{\End}{\operatorname{End}}
\newcommand{\id}{\operatorname{id}}
\newcommand{\gr}{\operatorname{gr}}
\newcommand{\Hom}{\operatorname{Hom}}
\newcommand{\Tr}{\operatorname{Tr}}
\newcommand{\Irr}{\operatorname{irrep}}
\newcommand{\IRR}{\operatorname{Irrep}}
\newcommand{\Ind}{\operatorname{Ind}}
\newcommand{\Res}{\operatorname{Res}}
\newcommand{\toba}{\mathscr{B}}
\newcommand{\Ds}{\mathscr{D}}
\newcommand{\ot}{\otimes}
\newcommand{\yd}[1]{{}^{ #1 }_{ #1 }\mathcal{YD}}
\newcommand{\dy}[1]{\mathcal{YD}^{ #1 }_{ #1 }}
\newcommand{\lmod}[1]{{}_{ #1 }\mathcal{M}}
\newcounter{tabla}\stepcounter{tabla}
\begin{document}
\noindent
\title[On the double of the Jordan plane]
{On the double of the Jordan plane}

\author[Andruskiewitsch]{Nicol\'as Andruskiewitsch}
\address[N.Andruskiewitsch]{Facultad de Matem\'atica, Astronom\'ia y F\'isica,
Universidad Nacional de C\'ordoba. CIEM -- CONICET. 
Medina Allende s/n (5000) Ciudad Universitaria, C\'ordoba, Argentina}
\email{nicolas.andruskiewitsch@unc.edu.ar}
%..............................
\author[Dumas]{Fran\c cois Dumas}
\address[F.~Dumas]{Universit\'e Clermont Auvergne, CNRS, LMBP, F-63000 Clermont-Ferrand, France}
\email{Francois.Dumas@uca.fr}
%..............................

\author[Pe\~na Pollastri]{H\'ector Mart\'\i n Pe\~na Pollastri}
\address[H.~Pe\~na Pollastri]{Facultad de Matem\'atica, Astronom\'ia y F\'isica, Universidad Nacional de C\'ordoba. CIEM -- CONICET. 
Medina Allende s/n (5000) Ciudad Universitaria, C\'ordoba, Argentina}
\email{hector.pena.pollastri@unc.edu.ar}

\thanks{}

\date{\today} 

%\keywords{Hopf algebras, Nichols algebras, Gelfand-Kirillov dimension.\\MSC2020: 16T05, 16T20, 17B37, 17B62.}

\begin{abstract}
We compute the simple finite-dimensional modules and the center of the Drinfeld double of the Jordan plane 
introduced in \texttt{arXiv:2002.02514} assuming that the  characteristic is zero.
\end{abstract} 
\maketitle

\section{Introduction}
Let $\ku$ be a field.
The well-known Jordan plane is the quadratic algebra $J = \ku\langle x,y \vert xy - yx - \frac{1}{2}x^2\rangle$;
it bears a structure of braided Hopf algebra where $x$ and $y$ are primitive \cite{G}.
When $\ku$ has characteristic 0 $J$ is indeed a Nichols algebra (any primitive element belongs to $V = \ku x \oplus \ku y$)
but if $\car \ku = p >0$, then $J$ covers the  Nichols algebra $\toba(V)$ 
which has now finite dimension \cite{clw}. In \cite{ap} $\toba(V)$  was called the  restricted Jordan plane and,
assuming $p > 2$, the Drinfeld double $D(\cH)$ of the bosonization $\cH = \toba(V) \# \ku C_p$ was studied. 
It was shown that $D(\cH)$ fits into an exact sequence of Hopf algebras 
$\xymatrix{ \nucleo \ar@{^{(}->}[r]  & D(\cH) \ar@{->>}[r]  & \mathfrak u(\spl_2(\ku))}$
where $\nucleo$ is local commutative  and $\mathfrak u(\spl_2(\ku))$
is the restricted enveloping algebra of $\spl_2(\ku)$. 
Hence the simple $D(\cH)$-modules are the same as those of $\mathfrak u(\spl_2(\ku))$ \cite[1.11]{ap}. 

\medbreak 
In \cite{ap} a Hopf algebra $\D$ covering $D(\cH)$ was defined, see Section \ref{sec:double};
$\D$ can be thought of as the Drinfeld double of the bosonization 
$J \# \ku \Z$.
Now $\D$ fits into an exact sequence of Hopf algebras 
$\xymatrix{ \cO(\Gb)  \ar@{^{(}->}[r]  & \D \ar^{\pi \qquad}@{->>}[r]  & U(\spl_2(\ku)) }$
where $\Gb = (\Gb_a \times \Gb_a) \rtimes \Gb_m$, $\cO(\Gb)$ is the algebra of regular functions and 
$U(\spl_2(\ku))$ is the enveloping algebra.

\medbreak
Both the definition of $\D$ and the exact sequence are still valid in characteristic 0, what we assume from now on.
In this paper we offer two results on  the structure of $\D$. First
we classify  in  Section \ref{sec:simples-D}  the finite-dimensional irreducible representations of $\D$; the outcome ressembles the case of $ D(\cH)$, 
analogy supported by Lemma \ref{lema:quotient-nilpotent}. Concretely, we prove:

\medbreak
\textbf{Theorem  \ref{thm:simple-D}. }
\emph{There is a bijection $\Irr \D \simeq \Irr \spl_2(\ku)$ induced by the morphism $\pi : \D \to U(\spl_2(\ku))$. }

\medbreak
Second  we consider in Section \ref{sec:localization} the localization $\D'$ of $\D$ at the powers of $x$ and $q:=ux + 2(1+g)$.
We show in Theorem \ref{thm:D'}  that $\D'$ is isomorphic to a localization of the  Weyl algebra 
$A_2(S)$, where $S:=\ku[z^{\pm 1},z']$ with $z$ and $z'$ algebraically independent.
This result allows us to compute the center of $\D$, that turns out to be a Kleinian singularity of type $A_1$,  see Theorem
\ref{prop:center-D}. Also by Theorem \ref{thm:D'}, $\D$ satisfies the Gelfand-Kirillov property,  see \S \ref{subsec:GK}.

\subsection*{Conventions}
If $\ell < n \in\N_0$, we set $\I_{\ell, n}=\{\ell, \ell +1,\dots,n\}$, $\I_n = \I_{1, n}$. 
If $Y$ is a subobject of an object $X$ in a category $\cC$, then we write $Y\leq X$.

In the rest of the paper $\ku$ is an algebraically closed field of characteristic 0.
The subspace of a vector space $V$ generated by $S\subset V$ is denoted by $\ku S$.
Let $A$ be an algebra and $a_1,\dots,a_n\in A$, $n\in\N$. Let $\mathcal Z(A)$ denote the center of $A$.
We denote by $\ku\langle a_1,\dots,a_n\rangle$ the subalgebra generated by $a_1,\dots,a_n$.
An element $x\in A$ is normal if $Ax = xA$. 
If $A = \bigoplus_{n\in \Z} A^n$ is graded and $T\subseteq A$ is a subspace, then $T^n \coloneqq T\cap A^n$.
If $M$ is an $A$-module and $m_1,\dots,m_n\in A$, $n\in\N$, then we denote by $\langle m_1,\dots,m_n\rangle$ the submodule generated by $m_1,\dots,m_n$.

Let $L$ be a Hopf algebra. The kernel of the counit $\varepsilon$ is denoted $L^+$, the antipode (always assumed bijective) by $\Ss$, the
space of primitive elements by $\Pc(L)$ and the group of group-likes by $G(L)$.
The space of $(g,h$)-primitives is $\Pc_{g,h}(L) =\{x \in L: \Delta(x) = x\otimes h + g \otimes x\}$ where $g,h \in G(L)$.
The category of left-left, respectively right-right, Yetter-Drinfeld modules over $L$ is denoted by $\yd{L}$, respectively $\dy{L}$.
We refer to \cite{Rad-libro} for unexplained terminology on Hopf algebras.

\section{The double of the Jordan plane}\label{sec:double}

\begin{definition} \label{def:double-jordan} \cite[2.3]{ap}
The Hopf algebra $\D$ is presented by generators $u$, $v$, $\zeta$, $g^{\pm 1}$, $x$, $y$ and relations
\begin{align}
\label{eq:def-jordan1}
&\begin{aligned}
g^{\pm 1} g^{\mp 1}&= 1, &\zeta  g &= g \zeta, 
\end{aligned}
\\ \label{eq:def-jordan2}
&\begin{aligned}
gx &= xg, & gy &= yg +xg, & \zeta  y &= y \zeta  + y, &  \zeta  x&=x \zeta  + x, 
\\
u g &= gu, & v g &=gv  + gu, & v\zeta  &= \zeta v +v, &  u\zeta  &= \zeta u +u,
\end{aligned}
\\
\label{eq:def-jordan3}
&\begin{aligned}
yx &= xy - \frac{1}{2} x^2, & vu &= uv - \frac{1}{2}u^2,
\\
u x &=x u , &   v x &= x v  + (1-g) + xu, 
\\
u y &=yu  +(1 - g), &  v y &=y v -g \zeta  + y u. 
\end{aligned}
\end{align}

The comultiplication is defined by $g\in G(\D)$, $u,\zeta\in\Pc(\D)$, $x,y\in\Pc_{g,1}(\D)$,
\begin{align*}
\Delta( v) &=  v\ot 1 + 1\ot v +\zeta\ot u.
\end{align*}
\end{definition}

We list some basic properties of $\D$, cf. \cite{ap} for details.

\bigbreak
\noindent $\bullet$  The following set  is a PBW-basis of $\D$:
\begin{align*}
B &= \{x^n\,y^r\,g^m\,\zeta ^k\,u^i\,v^j: i,j,k,n,r\in\N_0, \quad m\in\Z\}.
\end{align*}

\bigbreak
\noindent $\bullet$  $\D = \oplus_{n\in\Z}\D^{[n]}$ is $\Z$-graded by 
\begin{align}\label{eq:D-grading}
\deg x =\deg y = -1, && \deg u =\deg v = 1, && \deg g = \deg \zeta = 0.
\end{align}

\bigbreak
\noindent $\bullet$  Let $\Gamma$ be the infinite cyclic group with generator $g$ written multiplicatively and 
let $\mathfrak{h}$ be the one dimensional  Lie algebra. 
The subalgebra  $\D^0 = \ku \langle g^{\pm1}, \zeta \rangle$ is a Hopf subalgebra isomorphic to $ \ku \Gamma\otimes U(\mathfrak h)$.

\bigbreak
\noindent $\bullet$   The subalgebra  $\D^{< 0}= \ku \langle x, y \rangle$ is isomorphic to the Jordan plane $J$.
It is a Hopf algebra in $\yd{\ku \Gamma}$ and the bosonization $\D^{< 0}\# \ku \Gamma \simeq \ku \langle g^{\pm1}, x, y \rangle$.

\bigbreak
\noindent $\bullet$   The subalgebra  $\D^{> 0} = \ku \langle u, v \rangle$ is   isomorphic to $J$ as algebra via $u\mapsto x$ and $v\mapsto y$,
but with a different comultiplication; actually $\D^{> 0}$ is the graded dual of $\D^{< 0}$. 
Then $\D^{> 0}$ is a Hopf algebra in $\dy{U(\mathfrak h)}$ and  $U(\mathfrak h) \# \D^{> 0} \simeq \ku \langle \zeta, u, v \rangle$.

\bigbreak
\noindent $\bullet$  The  subalgebras $\D^{> 0}$,  $\D^{0}$ and $\D^{<0}$ are graded and satisfy
\begin{enumerate}[leftmargin=*,label=\rm{(\alph*)}]
\smallbreak\item $\D^{> 0}\subseteq\oplus_{n\in\N_0} \D^{[n]}$, $\D^{< 0}\subseteq\oplus_{n\in \N_0} \D^{[-n]}$.

\smallbreak
\item $(\D^{> 0})^{[0]} = \ku = (\D^{< 0})^{[0]}$.

\smallbreak
\item $\D^{\geq0} := \D^0 \D^{> 0}$ and $\D^{\leq0} := \D^{< 0} \D^0$ are Hopf subalgebras of $\D$.
\end{enumerate}

\bigbreak
\noindent $\bullet$  The algebra $\D$ admits an exhaustive ascending filtration that satisfies $\gr \D \simeq \ku[X_1, \dots, X_5, T^{\pm1}]$. Hence
$\D$ is a noetherian domain.

\bigbreak
\noindent $\bullet$
The subalgebra $\Nuc\coloneqq \ku\langle x,u,g^{\pm 1}\rangle$ is a commutative Hopf subalgebra, hence $\Nuc \simeq \cO(\Gb)$, 
where $\Gb$ is the algebraic group as in the Introduction.  
Let $e, f, h$  be  the Chevalley generators of  $\spl_2(\ku)$, i.~e.
$[e,f]=h$,  $[h,e]=2e$, $[h,f]=-2f$.
The Hopf algebra map $\pi \colon \D \to U(\spl_2(\ku))$ determined by
\begin{align}\label{eq:iso-hopf}
\pi (v)&= \tfrac{1}{4} e,& \pi(y) &= 2f  ,& \pi (\zeta)&=-\tfrac{1}{2} h,&
\pi (u)&= \pi(y) =  \pi (g-1) =0,
\end{align}
induces  an
isomorphism of Hopf algebras $\D/\D\Nuc^+ \simeq U(\spl_2(\ku))$.   

\begin{remark}
The Hopf algebra $\D$ is pointed with coradical $\D_0 =\ku\langle g^{\pm 1}\rangle$. Indeed, by \cite[5.3.4]{montgomery} 
it suffices to show that $\D$ admits an exhaustive coalgebra filtration $\D = \cup_{n\in \N_0} \D_{[n]}$ with 
$\D_{[0]} = \ku\langle g^{\pm 1}\rangle$. Let $\D_{[n]}$ defined recursively by
\begin{align*}
\D_{[0]} &= \ku\langle g^{\pm 1}\rangle, &
\D_{[1]} &= \D_{[0]} + \ku\{x,y,\zeta,u\},\\
\D_{[2]} &= \D_{[1]} + \ku\{v\}, &
\D_{[n]} &= \D_{[2]} \D_{[n-1]}, \qquad n\geq 3.  
\end{align*}
Using the PBW basis one can check this is an exhaustive coalgebra filtration.
\end{remark}

\section{Simple finite-dimensional  modules}\label{sec:simples-D}
\subsection{Overview}\label{subsec:strategy-simples-D}
Let $A$ be an algebra and $B$ a subalgebra. Let $\lmod{A}$
 (respectively $\IRR A$, $\Irr A$)  denote  the category of left $A$-modules
(respectively, the set of isomorphism classes of  simple objects  in $\lmod{A}$, the finite-dimensional ones).
Often we do not  distinguish a class in $\IRR A$ and one of its representatives. 
Let $\Ind_{B}^{A}: \lmod{B} \to \lmod{A}$ and $\Res^{B}_A: \lmod{A} \to \lmod{B}$ denote the induction  and restriction functors,
e.g. $\Ind_{B}^{A}(M) =  A \otimes_B M$.   Given $S\in \Irr A$, there exists $T\in\Irr B$ such that $T \leq \Res^B_A S$. 
By  the standard adjunction
\begin{align}\label{eq:adjunction-Ind-Res}
\Hom_A(\Ind^A_B M, N) &\simeq \Hom_B(M, \Res^B_A N), & N\in \lmod{A}, M\in \lmod{B},
\end{align}
$S$ is a quotient of $\Ind^A_B  T$.
 We apply this  (classical) remark twice to compute $\Irr \D$.
 First  we compute $\Irr \D^{\geq 0}$ by determining all simple quotients of $\Ind^{\D^{\geq0}}_H W$ for each $W \in \Irr H$, 
where $H \coloneqq \ku\langle g^{\pm 1},u,v\rangle$, cf. \cite{abdf-super}.
Then  we compute  $\Irr \D$ from  $\Irr \D^{\geq0}$ in the same way.

\subsection{Determination of \texorpdfstring{$\Irr \D^{\geq 0}$} {}}\label{subsec:Dgeq0-simples}
For each $(a,b) \in \ku^\times \times\ku$ there is a one-dimensional $H$-module, denoted by $\ku_{a,b}$, with basis $\tx_{a,b}$ and action
\begin{align}\label{eq:action-k-a-b}
g\cdot \tx_{a,b} &= a \tx_{a,b}, & v\cdot \tx_{a,b} &= b \tx_{a,b}, &  u\cdot\tx_{a,b} &= 0.
\end{align}
Then $\Irr H = \{\ku_{a,b} \colon (a,b) \in \ku^\times \times\ku \}$ \cite[3.3]{abdf-super}. Let $W_{a,b}\coloneqq \Ind^{\D^{\geq0}}_H\ku_{a,b}$.

\begin{lemma}
The elements $\tx^{(n)}_{a,b} \coloneqq\zeta^{n} \cdot \tx_{a,b}$, $n\geq 0$, form a basis of $W_{a,b}$.
\end{lemma}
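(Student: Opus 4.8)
The plan is to prove the stronger structural statement that $\D^{\geq0}$ is free as a right $H$-module with basis $\{\zeta^{n}:n\in\N_{0}\}$, i.e. $\D^{\geq0}=\bigoplus_{n\geq0}\zeta^{n}H$. Granting this, the lemma follows at once: since $-\otimes_{H}\ku_{a,b}$ commutes with direct sums,
\[
W_{a,b}=\D^{\geq0}\otimes_{H}\ku_{a,b}=\bigoplus_{n\geq0}\bigl(\zeta^{n}H\otimes_{H}\ku_{a,b}\bigr)=\bigoplus_{n\geq0}\ku\,\tx^{(n)}_{a,b},
\]
because each $\zeta^{n}H\otimes_{H}\ku_{a,b}$ is the line $\ku(\zeta^{n}\otimes\tx_{a,b})=\ku\,\tx^{(n)}_{a,b}$.

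To obtain the freeness I would compare two PBW-type bases. On the one hand, $\D^{\geq0}=\D^{0}\D^{>0}$ with $\D^{0}\cong\ku\Gamma\otimes U(\mathfrak h)$ spanned by $\{g^{m}\zeta^{k}\}$ and $\D^{>0}\cong J$ spanned by $\{u^{i}v^{j}\}$; hence $\D^{\geq0}$ is spanned by $\{g^{m}\zeta^{k}u^{i}v^{j}:m\in\Z,\ k,i,j\in\N_{0}\}$, and this family is linearly independent since it is a subset of the PBW basis $B$ of $\D$, so it is a basis of $\D^{\geq0}$. On the other hand, the relations \eqref{eq:def-jordan2} show that $\D^{>0}$ is stable under conjugation by $\Gamma$ (one has $gug^{-1}=u$ and $gvg^{-1}=v-u$), so $\ku\Gamma\,\D^{>0}=\D^{>0}\,\ku\Gamma$ is a subalgebra of $\D$ containing $g^{\pm1},u,v$; thus $H=\ku\Gamma\,\D^{>0}$ is spanned by $\{g^{m}u^{i}v^{j}\}$, which is again a subset of $B$ and therefore a basis of $H$.

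Finally, since $\zeta$ commutes with $g$ by \eqref{eq:def-jordan1}, every basis vector of $\D^{\geq0}$ can be rewritten as $g^{m}\zeta^{k}u^{i}v^{j}=\zeta^{k}\,(g^{m}u^{i}v^{j})$ with $g^{m}u^{i}v^{j}\in H$; comparing with the basis of $H$ found above yields directly $\D^{\geq0}=\bigoplus_{n\geq0}\zeta^{n}H$, as desired. I do not expect any serious obstacle here; the whole argument is essentially bookkeeping with the PBW basis, and the one point that deserves a word of justification is the spanning claim $H=\ku\Gamma\,\D^{>0}$ — reducing an arbitrary word in $g^{\pm1},u,v$ to normal form $g^{m}u^{i}v^{j}$ — which one handles by moving all powers of $g$ to the left via $ug=gu$, $vg=gv+gu$ and their $g^{-1}$-analogues, and then ordering the remaining $u,v$ by the Jordan relation $vu=uv-\tfrac12u^{2}$ of \eqref{eq:def-jordan3}, a reduction that terminates exactly as it does in $J\cong\D^{>0}$.
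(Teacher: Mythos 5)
Your proof is correct and follows essentially the same route as the paper: the paper's one-line argument $\D^{\geq0}\otimes_H \ku_{a,b}\simeq (U(\kh)\otimes_\ku H)\otimes_H\ku_{a,b}\simeq U(\kh)\otimes_\ku\ku_{a,b}$ is exactly your freeness statement $\D^{\geq0}=\bigoplus_{n\geq0}\zeta^{n}H$ as right $H$-modules. You merely make explicit, via the PBW basis of $\D$, the triangular factorization that the paper takes for granted.
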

\pf  Indeed,
$\D^{\geq0} \ot_H \ku_{a,b}\simeq (U(\kh)\ot_\ku H)\ot_H \ku_{a,b} 
\simeq U(\kh) \ot_\ku\ku_{a,b}$.
\epf

\begin{lemma}\label{lemma:Wab-is-simple-if-b-neq-0}
If $b\neq0$, then $W_{a,b}$ is simple for any $a\in \ku^\times$ .
\end{lemma}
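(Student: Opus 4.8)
The plan is to work entirely inside $W_{a,b} = \D^{\geq 0}\ot_H \ku_{a,b}$ using the basis $\{\tx^{(n)}_{a,b} = \zeta^n\cdot \tx_{a,b}\}_{n\geq 0}$ provided by the previous lemma, and to show that any nonzero submodule $0\neq N\leq W_{a,b}$ must contain $\tx_{a,b}$, hence equals $W_{a,b}$. First I would record the action of the generators $g$, $\zeta$, $u$, $v$ on the basis vectors. Since $g$ commutes with $\zeta$ and $\zeta g = g\zeta$, we get $g\cdot \tx^{(n)}_{a,b} = a\,\tx^{(n)}_{a,b}$, so every basis vector is a $g$-eigenvector with the \emph{same} eigenvalue $a$; this means $g$ gives no grading information here and the filtration must come from $\zeta$. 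From $u\zeta = \zeta u + u$ one computes $u\cdot\tx^{(n)}_{a,b}$ by induction: $u\cdot \zeta^n\tx_{a,b} = (\zeta-1)^n u\cdot \tx_{a,b} = 0$ since $u\cdot\tx_{a,b}=0$; wait — more carefully, $u\zeta^n = (\zeta+1)^n u$ as operators, but applied to $\tx_{a,b}$ where $u$ acts as $0$, this still gives $u\cdot\tx^{(n)}_{a,b}=0$ only if the commutator terms also die, which they do by the same relation iterated. Actually the cleaner statement: $u$ acts locally nilpotently, and I would compute $u\cdot\tx^{(n)}_{a,b}$ exactly (it will be a scalar multiple of $\tx^{(n-1)}_{a,b}$ coming from the relation $u\zeta=\zeta u + u$, giving something like $u\cdot\tx^{(n)}_{a,b} = c_n\,\tx^{(n-1)}_{a,b}$ with $c_n\neq 0$ when $b\neq 0$ — this is the crucial point where the hypothesis enters).

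The heart of the argument is the following. The operator $\zeta$ acts on $W_{a,b}$ as the shift $\tx^{(n)}_{a,b}\mapsto \tx^{(n+1)}_{a,b}$, raising the ``$\zeta$-degree''. I claim $u$ acts as a lowering operator with nonzero coefficient: using $vu = uv - \tfrac12 u^2$ is not directly relevant, but the relation governing how $u$ interacts with $\zeta$ (namely $u\zeta = \zeta u + u$) together with $v\cdot\tx_{a,b} = b\,\tx_{a,b}$ and the relation $u$ commutes appropriately, forces $u\cdot\tx^{(n)}_{a,b}$ to be a \emph{nonzero} multiple of $\tx^{(n-1)}_{a,b}$ precisely because $b\neq 0$. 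I would isolate this as the main computational step. Granting it: take $0\neq N\leq W_{a,b}$ and pick $0\neq w\in N$, write $w = \sum_{n=0}^{d}\lambda_n\,\tx^{(n)}_{a,b}$ with $\lambda_d\neq 0$. Applying $u$ exactly $d$ times and using that $u$ lowers $\zeta$-degree by one with nonzero scalar, we obtain $u^d\cdot w = (\text{nonzero scalar})\,\tx_{a,b}\in N$, hence $\tx_{a,b}\in N$. Since $\tx_{a,b}$ generates $W_{a,b}$ as a $\D^{\geq 0}$-module (it generates $\ku_{a,b}$ as an $H$-module and $W_{a,b}=\D^{\geq 0}\cdot\tx_{a,b}$), we conclude $N = W_{a,b}$, so $W_{a,b}$ is simple.

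The main obstacle is verifying the precise formula for $u\cdot\tx^{(n)}_{a,b}$ and confirming that its coefficient is nonzero exactly when $b\neq 0$; everything else is formal. I expect this coefficient to be an affine-linear (or polynomial) expression in $n$ whose constant term involves $b$, coming from pushing $u$ past $\zeta^n$ and then hitting $\tx_{a,b}$ where one uses a relation linking $u$, $v$, $\zeta$ — most likely the interplay of $u\zeta=\zeta u+u$ with the fact that on $\ku_{a,b}$ the element $v$ acts by $b$ while $u$ acts by $0$, so that a term like $vu$ vs $uv$ produces the $b$. If instead the relevant identity is $u\cdot\tx^{(n)}_{a,b} = n\cdot(\text{something})\,\tx^{(n-1)}_{a,b}$ with no $b$, the argument would break for the bottom vector, so one must be careful; I anticipate the correct formula genuinely requires $b\neq 0$, consistent with the fact (to be shown separately) that $W_{a,0}$ is \emph{not} simple.
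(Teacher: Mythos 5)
There is a genuine gap, and it sits exactly at the step you flag as ``the main computational step''. The operator $u$ cannot serve as your lowering operator, because $u$ acts by \emph{zero} on all of $W_{a,b}$, for every $a$ and every $b$ (including $b\neq 0$). Indeed the only relation needed to push $u$ past powers of $\zeta$ is $u\zeta=\zeta u+u$, i.e.\ $u\zeta^{n}=(\zeta+1)^{n}u$; no $v$ ever appears in this commutation, so applying it to $\tx_{a,b}$, where $u$ acts by $0$, gives $u\cdot\tx^{(n)}_{a,b}=(\zeta+1)^{n}u\cdot\tx_{a,b}=0$ for all $n$. Your first computation was the correct one; the hoped-for coefficient $c_n$ involving $b$ never materializes, because there is no mechanism (no $vu$ versus $uv$ exchange is ever triggered) by which $b$ could enter. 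Consequently $u^{d}\cdot w=0$ for any $w$, and the descent to $\tx^{(0)}_{a,b}$ collapses. (This is consistent with the paper's later proof of Lemma \ref{lemma:maximal-submodules-Wa0}, where the identity $u\cdot\tx^{(n)}_{a,b}=0$ is used for $W_{a,0}$; the same calculation is valid for all $b$.)

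The fix is to run your (otherwise correct) skeleton with the operator $v-b$ instead of $u$, which is what the paper does. Since $v\zeta=\zeta v+v$, one gets $v\cdot\tx^{(n)}_{a,b}=(\zeta+1)^{n}v\cdot\tx_{a,b}=b\sum_{k=0}^{n}\binom{n}{k}\tx^{(k)}_{a,b}$, so $(v-b)\cdot\tx^{(n)}_{a,b}=b\sum_{k=0}^{n-1}\binom{n}{k}\tx^{(k)}_{a,b}$: this is the lowering operator, and here the hypothesis $b\neq0$ enters. Iterating (the paper proves $(v-b)^{n}\cdot\tx^{(n)}_{a,b}=b^{n}n!\,\tx^{(0)}_{a,b}$ by induction, using $(v-b)^{n}\zeta=\zeta(v-b)^{n}+nv(v-b)^{n-1}$), one applies $(v-b)^{d}$ to a nonzero element with top $\zeta$-degree $d$ and obtains a nonzero multiple of $\tx^{(0)}_{a,b}$, which generates $W_{a,b}$; from there your conclusion goes through verbatim. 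So the overall strategy (filtration by $\zeta$-degree, descend to the cyclic vector) matches the paper's, but as written your proof does not close, because its engine $u$ annihilates the module.
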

\pf 
We use that $(v-b)^n \zeta= \zeta (v-b)^n+ n v (v-b)^{n-1} $ for all  $n\geq 0$, which is straightforward.
Notice that $(v-b)^n \cdot  \tx^{(n)}_{a,b}=b^n n!   \tx^{(0)}_{a,b}$ for all  $n\geq 0$.
Indeed, 
\begin{align*}
(v-b)^{n+1} \cdot  \tx^{(n+1)}_{a,b}=& (v-b) (v-b)^n \zeta\cdot  \tx^{(n)}_{a,b} \\
=& (v-b) \left( \zeta (v-b)^n+ n v (v-b)^{n-1} \right)\cdot  \tx^{(n)}_{a,b} \\
=&b^n n! (v-b)\zeta \cdot \tx^{(0)}_{a,b}+ n b^n n! v \cdot   \tx^{(0)}_{a,b}
=  b^{n+1} (n+1)! \tx^{(0)}_{a,b}.
\end{align*}
Let  $0 \neq z \in W_{a, b}$ and write $z =\sum_{k=0}^n c_k \tx^{(k)}_{a,b} $ with $n\geq 0$ and $c_n\ne 0$. 
Then  $(v-b)^n \cdot z = b^n n! c_n  \tx^{(0)}_{a,b}$. Thus $\langle z\rangle =W_{a,b}$ and the claim follows.
\epf

We next study the simple quotients of $W_{a,0}$, $a\in \kut$. Let
\begin{align*}
V_{a,c} &= \langle \tx^{(1)}_{a,0} + \tfrac{1}{2}c  \tx^{(0)}_{a,0} \rangle \leq  W_{a,0}, & T_{a,c}&\coloneqq W_{a,0}/V_{a,c}\,,&
c\in \ku.
\end{align*}
The choice of the coefficient $\frac{1}{2 }$ is convenient for calculations with $K_c$, see Remark \ref{rem:highest-weigth}.
Then $T_{a,c}$ is one-dimensional with basis $z_{a,c}$ and action
\begin{align}\label{eq:action-Lac}
g\cdot z_{a,c} &= a z_{a,c}, & \zeta\cdot z_{a,c} &= -\frac{1}{2}  c \ z_{a,c},& u\cdot z_{a,c} &= 0,&  v\cdot z_{a,c} &= 0.
\end{align}

\begin{lemma}\label{lemma:maximal-submodules-Wa0}
The set of maximal submodules of $W_{a,0}$ is $\{V_{a,c} \colon c \in \ku\}$.
\end{lemma}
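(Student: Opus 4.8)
The plan is to identify $W_{a,0}$, as a $\D^{\geq0}$-module, with the regular module over the polynomial ring $\ku[\zeta]$, and then invoke the standard correspondence between submodules of a cyclic module and ideals of the acting ring. The first step is to verify that on $W_{a,0}$ the generators $u$ and $v$ act by $0$ and $g$ acts by the scalar $a$. Writing $W_{a,0}=\D^{\geq0}\otimes_H\ku_{a,0}$ with cyclic generator $1\otimes\tx_{a,0}$, the relations $u\zeta=\zeta u+u$ and $v\zeta=\zeta v+v$ of \eqref{eq:def-jordan2} give $u\zeta^{n}=(\zeta+1)^{n}u$ and $v\zeta^{n}=(\zeta+1)^{n}v$ in $\D^{\geq0}$, while $g\zeta^{n}=\zeta^{n}g$ by \eqref{eq:def-jordan1}; since $u\cdot\tx_{a,0}=v\cdot\tx_{a,0}=0$ and $g\cdot\tx_{a,0}=a\,\tx_{a,0}$ by \eqref{eq:action-k-a-b}, applying these to the cyclic generator gives $u\cdot\tx^{(n)}_{a,0}=v\cdot\tx^{(n)}_{a,0}=0$ and $g\cdot\tx^{(n)}_{a,0}=a\,\tx^{(n)}_{a,0}$ for all $n\geq0$.

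Hence the $\D^{\geq0}$-action on $W_{a,0}$ factors through the surjective algebra map $\D^{\geq0}\to\ku[\zeta]$ sending $g\mapsto a$, $\zeta\mapsto\zeta$, $u\mapsto0$, $v\mapsto0$, which one checks respects the defining relations of $\D^{\geq0}$; under $\tx^{(n)}_{a,0}\leftrightarrow\zeta^{n}$ the module $W_{a,0}$ becomes the regular $\ku[\zeta]$-module. Since $g$ acts invertibly (as $a\ne0$) and $u,v$ act by $0$, a $\ku$-subspace of $W_{a,0}$ is a $\D^{\geq0}$-submodule exactly when it is a $\ku[\zeta]$-submodule, that is, an ideal of $\ku[\zeta]$. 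As $\ku$ is algebraically closed, its maximal ideals are precisely $(\zeta-\lambda)$ with $\lambda\in\ku$, so these are exactly the maximal submodules of $W_{a,0}$.

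Finally I would match the two descriptions: $V_{a,c}$ is generated by $\tx^{(1)}_{a,0}+\tfrac12 c\,\tx^{(0)}_{a,0}$, which corresponds to $\zeta+\tfrac c2\in\ku[\zeta]$, so $V_{a,c}=(\zeta+\tfrac c2)=(\zeta-(-\tfrac c2))$ is a maximal submodule; and since $\car\ku=0$ the map $c\mapsto-\tfrac c2$ is a bijection of $\ku$, so $\{V_{a,c}:c\in\ku\}$ is exactly the set of maximal submodules, which is the assertion. I do not foresee a genuine difficulty: the only point requiring care is the first step — that $u$ and $v$ act by $0$, which genuinely uses $b=0$ and therefore does not survive for $b\ne0$, consistently with Lemma \ref{lemma:Wab-is-simple-if-b-neq-0} — after which everything reduces to the routine cyclic-module/ideal dictionary.
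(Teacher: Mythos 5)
Your proof is correct and follows essentially the same route as the paper: you show $u,v$ act by $0$ and $g$ by $a$ on $W_{a,0}$ (via $u\zeta^n=(\zeta+1)^n u$, etc., which is the paper's binomial computation in compact form), identify $W_{a,0}$ with the regular $\ku[\zeta]$-module, and read off the maximal submodules from the maximal ideals $(\zeta-\lambda)$. The only difference is that you make explicit the matching $V_{a,c}\leftrightarrow(\zeta+\tfrac c2)$, which the paper leaves as ``the claim follows.''
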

\pf
The subcategory of $\D^{\geq0}$-modules where $u$, $v$ and $g-a$ act by $0$ is equivalent to the category of modules 
over $\D^{\geq0}/(u, v, g-a)\simeq \ku[\zeta]$. Now $W_{a,0}$ belongs to this subcategory, because  the action in the basis $\left\{\tx^{(n)}_{a,b}\right\}$ is
\begin{align*}
g\cdot \tx^{(n)}_{a,b}  &= g \zeta^n\cdot \tx_{a,b} = \zeta^n g\cdot \tx_{a,b} = a \tx^{(n)}_{a,b},\\
u \cdot \tx^{(n)}_{a,b} &= u\zeta^n \cdot \tx_{a,b} = \sum_{k=0}^n \binom{n}{k}\zeta^k u \cdot\tx_{a,b} = 0,\\
v \cdot \tx^{(n)}_{a,b} &= v\zeta^n \cdot \tx_{a,b} = \sum_{k=0}^n \binom{n}{k}\zeta^k v \cdot\tx_{a,b} = 0.   
\end{align*}
Under this correspondence, $W_{a,0}$ goes to the regular $\ku[\zeta]$-module; the claim follows.
\epf

\begin{prop}\label{prop:Dgeq0-simples}
$\Irr \D^{\geq0}\simeq \{T_{a,c}: (a,c) \in \ku^\times \times \ku\}$.
\end{prop}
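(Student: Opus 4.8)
The plan is to apply the induction–restriction strategy of \S\ref{subsec:strategy-simples-D} with $A = \D^{\geq 0}$ and $B = H$. Given $S \in \Irr \D^{\geq 0}$, the restriction $\Res^H_A S$ contains some $T \in \Irr H = \{\ku_{a,b}\}$, so by the adjunction \eqref{eq:adjunction-Ind-Res} $S$ is a quotient of some $W_{a,b}$. Hence it suffices to enumerate the finite-dimensional simple quotients of each $W_{a,b}$. When $b \neq 0$, Lemma \ref{lemma:Wab-is-simple-if-b-neq-0} says $W_{a,b}$ is simple, but it is infinite-dimensional (it has the basis $\{\tx^{(n)}_{a,b}\}_{n \geq 0}$ of the first Lemma), so it contributes nothing to $\Irr \D^{\geq 0}$. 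Thus every class in $\Irr \D^{\geq 0}$ is a simple quotient of some $W_{a,0}$ with $a \in \kut$.

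Next I would fix $a \in \kut$ and analyze $W_{a,0}$. Any proper quotient of $W_{a,0}$ is $W_{a,0}/V$ for some nonzero submodule $V$, and any simple such quotient forces $V$ to be maximal; by Lemma \ref{lemma:maximal-submodules-Wa0} the maximal submodules of $W_{a,0}$ are exactly the $V_{a,c}$, $c \in \ku$, with quotients $T_{a,c}$ described by \eqref{eq:action-Lac}. So the finite-dimensional simple quotients of $W_{a,0}$ are precisely $\{T_{a,c} : c \in \ku\}$, and ranging over $a$ we get that every member of $\Irr \D^{\geq 0}$ is isomorphic to some $T_{a,c}$ with $(a,c) \in \kut \times \ku$.

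It remains to show these are pairwise non-isomorphic, i.e. that $(a,c) \mapsto T_{a,c}$ is injective on isomorphism classes. This is immediate from \eqref{eq:action-Lac}: on the one-dimensional module $T_{a,c}$ the generator $g$ acts by the scalar $a$ and $\zeta$ acts by $-\tfrac{1}{2}c$, and these scalars are isomorphism invariants, so $T_{a,c} \simeq T_{a',c'}$ forces $a = a'$ and $c = c'$. Combining the three steps gives the bijection $\Irr \D^{\geq 0} \simeq \{T_{a,c} : (a,c) \in \kut \times \ku\}$.

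I do not expect a serious obstacle here, since the two preceding lemmas do the substantive work; the only point to be careful about is the very first reduction — one must note that $\Res^H_A S$ is nonzero and hence contains an irreducible $H$-submodule (true because $H$-modules always have simple submodules in the relevant finiteness setting, and $S$ restricted to $H$ is nonzero) so that the adjunction applies and $S$ is genuinely a quotient of some induced module $W_{a,b}$. Everything after that is bookkeeping against Lemmas \ref{lemma:Wab-is-simple-if-b-neq-0} and \ref{lemma:maximal-submodules-Wa0}.
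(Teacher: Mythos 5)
Your proposal is correct and follows essentially the same route as the paper: restrict to $H$, use the adjunction \eqref{eq:adjunction-Ind-Res} to realize a finite-dimensional simple as a quotient of some $W_{a,b}$, rule out $b\neq 0$ via Lemma \ref{lemma:Wab-is-simple-if-b-neq-0} and finite-dimensionality, and then invoke Lemma \ref{lemma:maximal-submodules-Wa0} for $W_{a,0}$. Your extra check that the $T_{a,c}$ are pairwise non-isomorphic (via the scalars by which $g$ and $\zeta$ act) is a point the paper leaves implicit, but it is not a different approach.
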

\pf
Let $T  \in \Irr \D^{\geq0}$. Then there exists $(a,b)\in \ku^\times\times\ku$ such that $\ku_{a,b}$ is isomorphic to a submodule of $\Res^H_{\D^{\geq0}} T$. By \eqref{eq:adjunction-Ind-Res} $T$ is a quotient of $W_{a,b}$. If $b\neq0$, then $T \simeq W_{a,b}$ by Lemma \ref{lemma:Wab-is-simple-if-b-neq-0},  contradicting $\dim T < \infty$. Hence $b=0$ and $T\simeq T_{a,c}$ for some $c\in\ku$.
\epf

\subsection{Calculation of \texorpdfstring{$\Irr \D$}{}}\label{subsec:D-simples} 
The \emph{Verma module} $\verma_{a,c}$, $(a,c) \in \ku^\times \times \ku$, is 
\begin{align*}
\verma_{a,c} \coloneqq  \Ind^{\D}_{\D^{\geq0}}T_{a,c} = \D\ot_{\D^{\geq0}} T_{a,c}.
\end{align*}
\begin{lemma}
The elements $z^{(i,j)}_{a,c} \coloneqq y^i x^j \cdot z_{a,c}$, $i,j\geq 0$, form a basis of $\verma_{a,c}$.
\end{lemma}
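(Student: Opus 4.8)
The plan is to produce a PBW-type basis for $\verma_{a,c}$ by using the standard PBW basis $B$ of $\D$ together with the structure of $T_{a,c}$ as a $\D^{\geq0}$-module. Recall $\D = \D^{<0}\,\D^0\,\D^{>0}$ as vector spaces via the PBW basis $B = \{x^n y^r g^m \zeta^k u^i v^j\}$, and that $\D^{\geq0} = \D^0\D^{>0} = \ku\langle g^{\pm1},\zeta,u,v\rangle$ has its own PBW basis $\{g^m\zeta^k u^i v^j\}$. Since $\verma_{a,c} = \D \otimes_{\D^{\geq0}} T_{a,c}$ and $T_{a,c}$ is free of rank one over $\D^{\geq0}$ modulo the left ideal generated by $u$, $v$, $g-a$, $\zeta + \tfrac12 c$, the key point is that $\D$ is free as a right $\D^{\geq0}$-module with basis $\{x^n y^r : n,r\in\N_0\}$. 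Granting this, $\verma_{a,c}$ has basis $\{x^n y^r \cdot z_{a,c}\}$; reindexing the order of $x$ and $y$ (using the Jordan plane relation $yx = xy - \tfrac12 x^2$, which lets one pass between the bases $\{x^n y^r\}$ and $\{y^i x^j\}$ of $J \simeq \D^{<0}$ by an invertible triangular change of coordinates) gives the asserted basis $\{z^{(i,j)}_{a,c} = y^i x^j \cdot z_{a,c}\}$.

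First I would establish that $\D$ is a free right $\D^{\geq0}$-module on $\{x^n y^r\}_{n,r\geq 0}$. This follows from the PBW basis $B$: every element of $\D$ is uniquely a $\ku$-linear combination of $x^n y^r g^m \zeta^k u^i v^j$, and the monomials $g^m\zeta^k u^i v^j$ ranging over $m\in\Z$, $k,i,j\in\N_0$ form a basis of $\D^{\geq0}$; so $B$ exhibits $\D = \bigoplus_{n,r\geq 0} x^n y^r \D^{\geq0}$ as a right $\D^{\geq0}$-module. (One must be mildly careful that $\{x^ny^r g^m\zeta^k u^iv^j\}$ with the $x,y$ on the left is indeed the stated PBW basis $B$, which it is.) Consequently $\verma_{a,c} = \D\otimes_{\D^{\geq0}} T_{a,c} = \bigoplus_{n,r\geq 0} x^ny^r \otimes T_{a,c}$, and since $T_{a,c}$ is one-dimensional with basis $z_{a,c}$, the family $\{x^ny^r\cdot z_{a,c}: n,r\geq 0\}$ is a basis of $\verma_{a,c}$.

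Second I would pass from the basis $\{x^n y^r\cdot z_{a,c}\}$ to $\{y^i x^j \cdot z_{a,c}\}$. In the Jordan plane $J = \ku\langle x,y\mid yx = xy-\tfrac12 x^2\rangle$ both $\{x^ny^r\}$ and $\{y^ix^j\}$ are PBW bases, and for each fixed total degree the transition matrix between the two families (within the finite-dimensional span of monomials of that degree) is triangular with nonzero diagonal entries — indeed $y^ix^j$ equals $x^jy^i$ plus lower-order-in-$y$ terms, by repeated application of the defining relation. Hence $\{y^ix^j : i,j\geq 0\}$ is also a right module basis of $\D$ over $\D^{\geq 0}$, and therefore $\{z^{(i,j)}_{a,c} = y^ix^j\cdot z_{a,c}\}$ is a basis of $\verma_{a,c}$.

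The only mild obstacle is bookkeeping: one must check that $g^{\pm1}$, $\zeta$, $u$, $v$ really do generate $\D^{\geq0}$ with the claimed PBW basis (so that $B$ genuinely splits as $\D^{<0}\otimes\D^{\geq0}$ on the nose, not merely up to filtration), and that the change of basis between $\{x^ny^r\}$ and $\{y^ix^j\}$ in $\D^{<0}\simeq J$ is invertible in each degree — both are routine consequences of the PBW basis $B$ and the single relation $yx=xy-\tfrac12 x^2$. No genuine difficulty arises; the lemma is essentially a restatement of freeness of $\D$ over $\D^{\geq0}$ together with a triangular reindexing.
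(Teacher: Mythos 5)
Your argument is correct and is essentially the paper's own proof: the paper's one-line justification $\D\ot_{\D^{\geq0}}T_{a,c}\simeq(\D^{<0}\ot_\ku\D^{\geq0})\ot_{\D^{\geq0}}T_{a,c}\simeq\D^{<0}\ot_\ku T_{a,c}$ is exactly your freeness-of-$\D$-over-$\D^{\geq0}$ step read off from the PBW basis. Your additional triangular change of basis between $\{x^ny^r\}$ and $\{y^ix^j\}$ in the Jordan plane just makes explicit a detail the paper leaves implicit, so the two proofs coincide in substance.
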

\pf
Indeed, 
$\D \ot_{\D^{\geq0}} T_{a,c}\simeq (\D^{<0}\ot_\ku \D^{\geq0})\ot_{\D^{\geq0}} T_{a,c}\simeq \D^{<0}\ot_\ku  T_{a,c}$.
\epf

For the next proof we need the following formulas from \cite[Lemma 2.5]{ap}:
\begin{align}
\label{eq:comm-u-yn}
u y^n &= y^n u + n y^{n-1} - \sum_{k=0}^{n-1} \binom{n}{k+1} \frac{(k+1)!}{2^k} y^{n-1-k} x^k g, & n\geq1; \\
\label{eq:comm-v-xm}
v x^m &= x^m v + m x^{m-1}(1-g) + m x^m u, & m\geq1.
\end{align}

\begin{lemma}\label{lemma:Mac-simple-if-a-neq-1}
If $a\neq 1$ then $\verma_{a,c}$ is simple.
\end{lemma}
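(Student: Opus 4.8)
The plan is to show that $\verma_{a,c}$ is generated as a $\D$-module by any nonzero element, using the "raising" operators $v$ and $u$ together with the grading to reach the generator $z_{a,c}$, and then appeal to Proposition \ref{prop:Dgeq0-simples} to conclude simplicity. More precisely, since $\verma_{a,c}$ is $\Z$-graded (with $z^{(i,j)}_{a,c}$ in degree $-(i+j)$) and each homogeneous component is finite-dimensional, it suffices to prove that any nonzero submodule $N \leq \verma_{a,c}$ contains $z_{a,c}$; then $N = \verma_{a,c}$ because $\verma_{a,c} = \D \cdot z_{a,c}$.

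First I would take a nonzero homogeneous element $z \in N$ of minimal degree $-n$ (minimal $n \geq 0$), writing $z = \sum_{i+j=n} \lambda_{i,j}\, z^{(i,j)}_{a,c}$. If $n = 0$ then $z$ is a scalar multiple of $z_{a,c}$ and we are done, so assume $n \geq 1$. The goal is to apply $u$ and $v$ to strictly lower $n$, contradicting minimality (note that applying $u$ or $v$ raises the degree by $1$, i.e. lowers $n$ by $1$, and cannot leave the submodule). Using the commutation formulas \eqref{eq:comm-u-yn} and \eqref{eq:comm-v-xm} together with the action \eqref{eq:action-Lac} — in particular $u \cdot z_{a,c} = v \cdot z_{a,c} = 0$ and $g \cdot z_{a,c} = a z_{a,c}$ — one computes $u \cdot z^{(i,j)}_{a,c}$ and $v \cdot z^{(i,j)}_{a,c}$ as explicit combinations of the $z^{(i',j')}_{a,c}$ with $i'+j' = n-1$. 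The key point is that the hypothesis $a \neq 1$ makes the factors $(1-g)$ appearing in these formulas act as the nonzero scalar $(1-a)$, so that $u$ and $v$ do not annihilate the top-degree part; concretely, from \eqref{eq:comm-v-xm} one gets a term $m x^{m-1}(1-a)$ and from \eqref{eq:comm-u-yn} the leading term $n y^{n-1}$ survives. I would argue that for at least one choice among $u$ and $v$, the image of $z$ is a nonzero element of $N$ of degree $-(n-1)$, which contradicts minimality. This forces $n = 0$, hence $z_{a,c} \in N$ and $N = \verma_{a,c}$.

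The main obstacle is the bookkeeping in the inductive step: applying $u$ (or $v$) to $z = \sum_{i+j=n}\lambda_{i,j} z^{(i,j)}_{a,c}$ produces, via \eqref{eq:comm-u-yn}–\eqref{eq:comm-v-xm} and the relation $yx = xy - \tfrac12 x^2$, a sum over monomials $y^{i'}x^{j'}$ with $i'+j'=n-1$, and one must check that the resulting linear combination is not identically zero. This could fail for a single operator if the coefficients conspire to cancel, so the careful part is to show that the pair $(u \cdot z,\, v \cdot z)$ cannot both vanish when $z \neq 0$ and $a \neq 1$ — for instance by comparing the coefficient of $y^{n-1}$ (which $u$ produces with coefficient essentially $\lambda_{n,0}\cdot n$ plus lower corrections) and the coefficient of $x^{n-1}$ (which $v$ produces with a factor $(1-a)$), and using that $\lambda_{i,j}$ are not all zero. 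An alternative, perhaps cleaner, route is to use $g$: on $\verma_{a,c}$ the element $g$ acts, and together with the $\D^{\geq 0}$-action one may identify a convenient highest-weight-type vector; but the degree-lowering argument above is the most direct and is the one I would carry out, with the nonvanishing check being the single nontrivial computation.
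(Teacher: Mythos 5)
Your overall strategy (pass to a homogeneous element of a nonzero submodule and use $u$, $v$ to lower $n$ until you reach $z_{a,c}$) is viable and close in spirit to the paper's proof, but as written it has two genuine gaps. First, you begin by choosing a nonzero \emph{homogeneous} element of the submodule $N$; submodules of graded modules need not be graded, so this requires justification. It is true here, but only because the homogeneous components of $\verma_{a,c}$ are exactly the $\zeta$-eigenspaces: from $\zeta y=y\zeta+y$ and $\zeta x=x\zeta+x$ one gets $\zeta\cdot z^{(i,j)}_{a,c}=(i+j-\tfrac{c}{2})z^{(i,j)}_{a,c}$, the eigenvalues on distinct components are distinct and $\zeta$ acts locally finitely, hence every submodule is $\zeta$-stable and therefore graded --- an argument you would need to spell out.

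Second, and more seriously, the crux of the lemma is precisely the claim you leave unproven: that for nonzero homogeneous $z=\sum_{i+j=n}\lambda_{i,j}z^{(i,j)}_{a,c}$ with $n\geq 1$, the elements $u\cdot z$ and $v\cdot z$ cannot both vanish. Your proposed check --- compare the coefficient of $y^{n-1}$ in $u\cdot z$ (which is $(1-a)n\lambda_{n,0}$) with the coefficient of $x^{n-1}$ in $v\cdot z$ --- does not suffice: if $\lambda_{n,0}=0$ the first coefficient gives nothing, and the coefficient of $z^{(0,n-1)}_{a,c}$ in $v\cdot z$ is \emph{not} simply $(1-a)n\lambda_{0,n}$, because $vy=yv-g\zeta+yu$ together with \eqref{eq:comm-y-g} lets every $\lambda_{i,j}$ with $i\geq 1$ contribute to it (with coefficients involving $a$ and $c$), so "the $\lambda_{i,j}$ are not all zero" yields no contradiction. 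What does work is the triangularity of the $u$-action, which is exactly what the paper exploits: \eqref{eq:comm-u-yn} gives $u\cdot z^{(i,j)}_{a,c}=(1-a)\,i\,z^{(i-1,j)}_{a,c}-a\sum_{k\geq 1}\binom{i}{k+1}\frac{(k+1)!}{2^k}z^{(i-1-k,j+k)}_{a,c}$, and reading off the coefficients of $z^{(n-1,0)}_{a,c}, z^{(n-2,1)}_{a,c},\dots$ successively shows that $u\cdot z=0$ forces $\lambda_{i,j}=0$ for all $i\geq 1$; then $z$ is a multiple of $z^{(0,n)}_{a,c}$ and $v\cdot z=n(1-a)z^{(0,n-1)}_{a,c}\neq 0$ by \eqref{eq:comm-v-xm}. (The paper avoids the grading discussion altogether by applying $v^{M}u^{N}$ to an arbitrary nonzero element and using this same triangularity to land on a nonzero multiple of $z^{(0,0)}_{a,c}$.) Until these two points are filled in, your proposal is a plan rather than a proof.
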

\begin{proof}
From \eqref{eq:comm-u-yn} we get the following formula for $i\geq 1$ and $ j\geq 0$:
\begin{align*}
u \cdot z_{a,c}^{(i,j)}& = (1-a) i z_{a,c}^{(i-1,j)} - a \sum_{k=1}^{i-1} \binom{i}{k+1} \frac{(k+1)!}{2^k}z_{a,c}^{(i-1-k,j+k)};
\end{align*}
 clearly $u \cdot z_{a,c}^{(0,j)} = 0$ for all $ j\geq 0$. Next we prove by induction that
\begin{align*}
u^i \cdot z_{a,c}^{(i,j)}& = (1-a)^i i! z_{a,c}^{(0,j)}, & i, j \geq 0. 
\end{align*}
Indeed, 
\begin{align*}
u^{i+1} \cdot z_{a,c}^{(i+1,j)}=& (1-a) (i+1) u^iz_{a,c}^{(i,j)} - a \sum_{k=1}^{i} \binom{i+1}{k+1} \frac{(k+1)!}{2^k}u^iz_{a,c}^{(i-k,j+k)}\\
=& (1-a)^{i+1} (i+1)! z_{a,c}^{(0,j)} \\
&- a \sum_{k=1}^{i} \binom{i+1}{k+1} \frac{(k+1)!}{2^k} (i-k)! (1-a)^{i-k} u^k z_{a,c}^{(0,j+k)} \\
=& (1-a)^{i+1} (i+1)! z_{a,c}^{(0,j)}.
\end{align*}
Thus $u^n \cdot z_{a,c}^{(i,j)} = 0$ if $n>i$. From \eqref{eq:comm-v-xm} we get  $v \cdot z_{a,c}^{(0,j)} = j (1-a) z_{a,c}^{(0,j-1)}$,
and it becomes evident that
\begin{align*}
v^j u^i \cdot z_{a,c}^{(i,j)} = (1-a)^{i+j} i! j! z_{a,c}^{(0,0)}.
\end{align*}
Then $v^m u^i \cdot z_{a,c}^{(i,j)} = 0$ for $m>j$ and $i\geq 0$. 
Given  $z \in \verma_{a,c}$, $z\neq 0$, write  
\begin{align*}
z &= \sum_{i=0}^{N} \sum_{j=0}^{M_i} d_{i,j} z_{a,c}^{(i,j)} \neq 0  \hspace{20pt}  \text{ with } d_{N,M_N} \neq 0,
\\
\text{hence }\hspace{20pt}  v^{M_N} u^{N} \cdot z&= d_{N, M_N} (1-a)^{M_N + N} N! M_N! z_{a,c}^{(0,0)}.
\end{align*}
Since $a \neq 1$ and $\verma_{a,c} =\langle z_{a,c}^{(0,0)}\rangle$ the Lemma follows. 
\end{proof}

\begin{coro}\label{coro:every-finite-D-simple-is-quotient}
Every $S\in \Irr \D$  is a quotient of $\verma_{1,c}$ for some $c\in\ku$. \qed
\end{coro}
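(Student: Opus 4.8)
The plan is to mimic the argument used for $\Irr\D^{\geq0}$ in Proposition \ref{prop:Dgeq0-simples}, now with the pair of functors $(\Ind^{\D}_{\D^{\geq0}},\Res^{\D^{\geq0}}_{\D})$ in place of $(\Ind^{\D^{\geq0}}_{H},\Res^{H}_{\D^{\geq0}})$. Let $S\in\Irr\D$. By the general remark in \S\ref{subsec:strategy-simples-D}, applied to $A=\D$ and $B=\D^{\geq0}$, there is a simple $\D^{\geq0}$-module $T$ with $T\leq\Res^{\D^{\geq0}}_{\D}S$, and by the adjunction \eqref{eq:adjunction-Ind-Res} $S$ is a quotient of $\Ind^{\D}_{\D^{\geq0}}T$. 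Since $\dim S<\infty$ and $\Irr\D^{\geq0}=\{T_{a,c}:(a,c)\in\ku^\times\times\ku\}$ by Proposition \ref{prop:Dgeq0-simples}, we get that $S$ is a quotient of $\verma_{a,c}=\Ind^{\D}_{\D^{\geq0}}T_{a,c}$ for some $(a,c)\in\ku^\times\times\ku$.

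It remains to force $a=1$. Here Lemma \ref{lemma:Mac-simple-if-a-neq-1} does the work: if $a\neq1$, then $\verma_{a,c}$ is simple, hence $S\simeq\verma_{a,c}$; but $\verma_{a,c}$ is infinite-dimensional (it has the basis $\{z^{(i,j)}_{a,c}\}_{i,j\geq0}$), contradicting $\dim S<\infty$. Therefore $a=1$, and $S$ is a quotient of $\verma_{1,c}$ for some $c\in\ku$, which is exactly the assertion.

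I do not expect any genuine obstacle here: the corollary is a formal consequence of Proposition \ref{prop:Dgeq0-simples} together with Lemma \ref{lemma:Mac-simple-if-a-neq-1}, in the same two-step induction-restriction pattern announced in \S\ref{subsec:strategy-simples-D}. The only point to be a little careful about is that the simple $\D^{\geq0}$-submodule $T$ of $\Res^{\D^{\geq0}}_{\D}S$ need not be finite-dimensional a priori; but this is irrelevant, since Proposition \ref{prop:Dgeq0-simples} already classifies \emph{all} simple $\D^{\geq0}$-modules (they are automatically the one-dimensional $T_{a,c}$), so $T$ is one of them regardless. After that, everything is immediate, which is presumably why the excerpt marks the statement with \qed rather than giving a separate proof block.
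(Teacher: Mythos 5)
Your argument is correct and is essentially the paper's intended one: the corollary is stated with a \qed because it follows formally from the remark of \S\ref{subsec:strategy-simples-D} applied to $\D^{\geq0}\subset\D$, Proposition \ref{prop:Dgeq0-simples}, and Lemma \ref{lemma:Mac-simple-if-a-neq-1}, exactly as you argue. One caveat about your closing remark: Proposition \ref{prop:Dgeq0-simples} classifies only the \emph{finite-dimensional} simple $\D^{\geq0}$-modules (in the paper's notation $\Irr$, not $\IRR$; e.g.\ the $W_{a,b}$ with $b\neq0$ are infinite-dimensional simples not of the form $T_{a,c}$), so your proposed resolution of the finite-dimensionality point is misstated --- but the point is vacuous anyway, since $T\leq\Res^{\D^{\geq0}}_{\D}S$ and $\dim S<\infty$ already force $\dim T<\infty$.
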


Thus we need to study the Verma modules $\verma_{1,c}$. For the next lemma we use the following commutation relation from \cite[Lemma 2.5]{ap}:
\begin{align}\label{eq:comm-y-g}
g^n y^\ell &= \sum_{k=0}^{\ell} \binom{\ell}{k} \frac{[2n]^{[k]}}{2^k}  y^{\ell-k} x^k g^n, && n,\ell\in\N_0.
\end{align}

\begin{lemma}\label{lemma:action-g-1-locally-nilpotent}
The action of $g-1$ in $\verma_{1,c}$ is locally nilpotent. 
\end{lemma}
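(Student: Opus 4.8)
The plan is to show that for each basis vector $z_{1,c}^{(i,j)} = y^i x^j \cdot z_{1,c}$, some power of $g-1$ annihilates it; since the $z_{1,c}^{(i,j)}$ span $\verma_{1,c}$, any single element lies in a finite-dimensional subspace stable under $g$, and local nilpotency of $g-1$ follows. So the core is a computation of how $g$ (equivalently $g-1$) acts on $z_{1,c}^{(i,j)}$, for which the key tool is the commutation relation \eqref{eq:comm-y-g} together with the relations $gx = xg$, $\zeta$ acting by $-\tfrac12 c$, and $g\cdot z_{1,c} = z_{1,c}$ (the case $a=1$).

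First I would compute $g \cdot z_{1,c}^{(i,j)} = g y^i x^j \cdot z_{1,c}$. Since $g$ commutes with $x$, this equals $(g y^i) x^j \cdot z_{1,c}$, and by \eqref{eq:comm-y-g} with $n=1$ we have $g y^i = \sum_{k=0}^{i} \binom{i}{k} \frac{[2]^{[k]}}{2^k} y^{i-k} x^k g$. Note $[2]^{[k]} = 2(2-1)\cdots(2-k+1)$ in the relevant notation, which vanishes for $k \geq 3$ (it has a factor $2-2 = 0$), so the sum actually has only the terms $k = 0, 1, 2$. Applying this and using $g \cdot z_{1,c} = z_{1,c}$, one finds
\begin{align*}
g \cdot z_{1,c}^{(i,j)} &= z_{1,c}^{(i,j)} + (\text{lower-}i\text{ terms}),
\end{align*}
i.e. $(g-1) \cdot z_{1,c}^{(i,j)}$ is a linear combination of $z_{1,c}^{(i',j')}$ with $i' < i$. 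This strictly lowers the degree in $y$, so $(g-1)^{i+1} \cdot z_{1,c}^{(i,j)} = 0$.

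Then I would conclude: given $0 \neq z \in \verma_{1,c}$, write $z$ as a finite sum $\sum d_{i,j} z_{1,c}^{(i,j)}$ with $i \leq N$ for some $N$; by the previous paragraph $(g-1)^{N+1} \cdot z = 0$, which is exactly local nilpotency of $g-1$.

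The main obstacle I anticipate is purely bookkeeping: carefully applying \eqref{eq:comm-y-g} and the $x$-$y$ commutation relation $yx = xy - \tfrac12 x^2$ to re-express $y^{i-k} x^k x^j \cdot z_{1,c}$ in the PBW basis $z_{1,c}^{(i',j')}$, and checking the degree-lowering claim survives this re-expression (it does, since $yx = xy - \tfrac12 x^2$ does not increase the $y$-degree). There is no genuine difficulty beyond keeping track of indices; the vanishing of $[2]^{[k]}$ for $k\geq 3$ is what keeps the computation finite and clean.
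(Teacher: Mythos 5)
Your proof is correct and takes essentially the same route as the paper: apply \eqref{eq:comm-y-g} (with $n=1$) to see that $(g-1)\cdot z^{(i,j)}_{1,c}$ is a combination of basis vectors $z^{(i-k,j+k)}_{1,c}$ with $k\geq 1$, so $g-1$ strictly lowers the $y$-exponent and is therefore locally nilpotent. One small correction that does not affect the argument: $[2]^{[k]}$ is the rising factorial $2\cdot 3\cdots(k+1)=(k+1)!$ (compare the coefficients in \eqref{eq:comm-u-yn}), not a falling factorial, so it does not vanish for $k\geq 3$; this is immaterial because the sum in \eqref{eq:comm-y-g} is finite anyway, the terms $y^{i-k}x^{k+j}\cdot z_{1,c}$ are already basis vectors (no reordering via $yx=xy-\tfrac12x^2$ is needed), and the degree-lowering is what drives the proof.
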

\begin{proof}
We prove recursively on $i$  that for each $i,j \geq 0$ there exists $n_{i,j}\in \N$ such that $(g-1)^{n_{i,j}} \cdot z_{1,c}^{(i,j)} = 0$. 
If $i=0$, then $g \cdot z_{1,c}^{(0,j)} = z_{1,c}^{(0,j)}$ for  $j\in \N$, hence we  take $n_{0,j} = 1$. 
Given such $n_{k,j}$ for every $j\in\N$ and $k<i$, we have
\begin{align*}
(g-1)\cdot z_{1,c}^{(i,j)} &= (g-1)y^i z_{1,c}^{(0,j)} =  \sum_{k=1}^{i} \binom{i}{k} \frac{[2]^{[k]}}{2^k} z_{1,c}^{(i-k,j+k)}
\end{align*}
by \eqref{eq:comm-y-g}. Taking $n_{i,j} = \max_{0\leq k < i}\{n_{k,i+j-k}\} + 1$,  the Lemma follows.
\end{proof}
\begin{prop}\label{prop:g-1-x-u-D-nilpotent} Let $M\in \lmod{\D}$, $\dim M < \infty$, with associated representation  $\rho\colon \D\to \End M$.
Then  $g-1$, $x$ and $u$ act  nilpotently on $M$.
\end{prop}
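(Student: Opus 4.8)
The statement concerns the nilpotency of $g-1$, $x$, and $u$ on any finite-dimensional $\D$-module $M$. The natural strategy is to reduce to the simple case and then invoke the results already assembled. First I would argue that it suffices to treat simple modules: by Corollary \ref{coro:every-finite-D-simple-is-quotient} every $S\in\Irr\D$ is a quotient of some $\verma_{1,c}$, and by Lemma \ref{lemma:action-g-1-locally-nilpotent} the action of $g-1$ on $\verma_{1,c}$ is locally nilpotent, hence so is its action on the quotient $S$; as $\dim S<\infty$, local nilpotency is plain nilpotency, so $\rho(g-1)$ is nilpotent on every simple $\D$-module. To pass from simple modules to an arbitrary finite-dimensional $M$, take a composition series $0 = M_0 \subset M_1 \subset \dots \subset M_t = M$ with simple subquotients; then $\rho(g-1)$ is nilpotent on each $M_i/M_{i-1}$, so a suitable power of $\rho(g-1)$ maps each $M_i$ into $M_{i-1}$, and $\rho(g-1)^t$ (or a large enough power) kills $M$. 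Thus $g-1$ acts nilpotently on $M$.

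For $x$ and $u$ I would exploit the $\Z$-grading \eqref{eq:D-grading} together with the relation between these elements and $g$. The cleanest route: observe that $g-1$, $x$, $u$ all lie in the subalgebra $\Nuc = \ku\langle x,u,g^{\pm1}\rangle \simeq \cO(\Gb)$, which is commutative, and $\Nuc^+$ is the augmentation ideal. On a finite-dimensional $M$, restrict to $\Nuc$: since $\Nuc$ is a commutative finitely generated algebra acting on a finite-dimensional space, $\rho(\Nuc)$ is a finite-dimensional commutative algebra, and $M$ decomposes as a direct sum of generalized eigenspaces for the maximal ideals of $\rho(\Nuc)$ in its support. The key point is then to show every such maximal ideal is the augmentation ideal, i.e. $x$, $u$, $g-1$ act by nilpotent operators. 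For $g-1$ this is the previous paragraph. For $x$ and $u$, I would use that $x$ and $u$ are homogeneous of nonzero degree ($\deg x = -1$, $\deg u = 1$ in the grading \eqref{eq:D-grading}), so on a generalized eigenspace decomposition compatible with the grading their eigenvalues must be $0$; alternatively, and more robustly, one uses the commutation relations — e.g. $ux = xu$, $gx = xg$, $gu = ug$ — to see $\Nuc$ is a polynomial-type algebra with $g$ invertible, and any algebra map $\Nuc \to \ku$ (a point of $\Gb = (\Gb_a\times\Gb_a)\rtimes\Gb_m$) sends $x,u$ to the coordinates and $g$ to the $\Gb_m$-coordinate; but the characters of $\Gb$ realized on $M$ are constrained because, say, $v x = xv + (1-g) + xu$ forces, on a joint generalized eigenspace, relations among the eigenvalues of $x$, $u$, $g$.

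Rather than the abstract group-scheme argument, I expect the efficient proof mirrors Lemma \ref{lemma:action-g-1-locally-nilpotent}: reduce to $S$ a quotient of $\verma_{1,c}$, and show directly from the PBW basis $z^{(i,j)}_{1,c} = y^i x^j z_{1,c}$ and the commutation formulas \eqref{eq:comm-u-yn}, \eqref{eq:comm-v-xm}, \eqref{eq:comm-y-g} that $x$ and $u$ act locally nilpotently on $\verma_{1,c}$. For $x$: on $z^{(i,j)}_{1,c}$ we have $x\cdot z^{(i,j)}_{1,c}$ expressible in terms of the same basis via $yx = xy - \tfrac12 x^2$ and $ux = xu$; tracking the total $x$-degree $j$ one sees iterated multiplication by $x$ stays in the span of $\{z^{(i,j')}_{1,c}: j'\ge j\}$, but this span is not finite-dimensional, so one instead passes to the finite-dimensional simple quotient $S$ and argues that the image of $x$ there is nilpotent because $S$ is finite-dimensional and $x$ is a homogeneous degree $-1$ element of $\D$ acting on a module on which $\zeta$ (degree $0$) acts and the grading descends. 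The main obstacle is precisely this descent-of-grading issue: $\verma_{1,c}$ is not graded in a way that forces nilpotency, so one must work on the finite-dimensional quotient $S$ and argue, via the operator $u$ raising and $x$ lowering a suitable integer-valued "weight" filtration (coming from the $\zeta$-action or equivalently the $\spl_2$-weight through $\pi$), that these operators are nilpotent. Concretely: $S$ carries an action of $\pi(\zeta) = -\tfrac12 h$ with $h$ acting semisimply with integer eigenvalues on the finite-dimensional $\spl_2$-module $\pi(S)$; lifting, the $\zeta$-generalized-eigenspace decomposition of $S$ is finite, $u$ and $x$ shift the $\zeta$-eigenvalue by $\pm 1$ (from $u\zeta = \zeta u + u$, $x\zeta = \zeta x + x$), hence iterating exits the (finite) support and $\rho(x)$, $\rho(u)$ are nilpotent. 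This, combined with the composition-series argument above, gives nilpotency on all of $M$. I expect writing out the $\zeta$-eigenvalue-shift bookkeeping cleanly to be the only real work.
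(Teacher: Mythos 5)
Your treatment of $g-1$ is the paper's: reduce to $S\in\Irr\D$ by a composition series, then combine Corollary \ref{coro:every-finite-D-simple-is-quotient} with Lemma \ref{lemma:action-g-1-locally-nilpotent} and finite-dimensionality. For $x$ and $u$ you take a genuinely different route. The paper uses the trace criterion in characteristic $0$: from $yx=xy-\tfrac12x^2$ one gets $x^{n+1}=\tfrac{2}{n}\left(x^{n}y-yx^{n}\right)$, and from $\zeta x=x\zeta+x$ one gets $x=\zeta x-x\zeta$, so every power of $\rho(x)$ is (a scalar multiple of) a commutator, all traces $\Tr(\rho(x)^n)$ vanish, and $\rho(x)$ is nilpotent; similarly for $u$ via $u^{n+1}=\tfrac{2}{n}\left(u^{n}v-vu^{n}\right)$ and $u=u\zeta-\zeta u$. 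Your final argument instead uses the weight shift: $\zeta x-x\zeta=x$ and $u\zeta-\zeta u=u$ show that $x$ and $u$ move the generalized $\zeta$-eigenspaces of $M$ by $\pm1$, and since $\dim M<\infty$ there are only finitely many generalized eigenvalues, so iterating leaves the support and $\rho(x),\rho(u)$ are nilpotent. This is correct, and in fact it needs neither the reduction to simple modules nor any mention of $\pi$: your detour through ``$\pi(S)$ as a finite-dimensional $\spl_2(\ku)$-module with integral $h$-weights'' should be dropped, since the fact that simples factor through $U(\spl_2(\ku))$ is precisely what this Proposition is later used to prove (via Lemma \ref{lema:quotient-nilpotent} and Theorem \ref{thm:simple-D}), whereas the finiteness of the $\zeta$-spectrum follows from $\dim M<\infty$ alone. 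The earlier exploratory alternatives in your write-up (the character/group-scheme argument for $\Nuc$, local nilpotency on $\verma_{1,c}$) are not carried out and are not needed. Comparing the two proofs: yours is elementary linear algebra on generalized eigenspaces and makes the mechanism (the $\ad\zeta$-eigenvalue of $x$ and $u$ is nonzero) transparent; the paper's trace argument is shorter, avoids eigenspace bookkeeping, and isolates exactly where characteristic $0$ enters.
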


\begin{proof} Arguing by induction on $\dim M$, we may assume that
$M\in \Irr \D$.  Then  $g-1$ acts nilpotently on $M$ by Corollary \ref{coro:every-finite-D-simple-is-quotient} and Lemma \ref{lemma:action-g-1-locally-nilpotent}.
Recall that a linear operator $T$ on a finite-dimensional space is nilpotent if and only if $\Tr(T^n) = 0$ for every $n\in \N$. 
Since $x^{n+1} = \frac{2}{n}\left(x^n y - y x^n\right)$ for  $n\in\N$ we get
$\Tr(\rho(x)^{n+1}) = \frac{2}{n}\Tr(\rho(x)^n \rho(y) - \rho(y) \rho(x)^n) = 0$.
Since $x = \zeta x - x \zeta$ we also have that $\Tr(\rho(x)) = 0$. So $\rho(x)$ is nilpotent. The  argument for $u$ 
is similar using the relations $u^{n+1} = \frac{2}{n}\left(u^n v - v u^n\right)$ and $u = u \zeta -  \zeta u$. 
\end{proof}
We  now determine $\Irr \D$ via an argument  connecting with \cite[1.11]{ap}. 

\begin{lemma}\label{lema:quotient-nilpotent}
Let $A$ be an algebra and $ \cF  \subset A$  a family of   elements satisfying
\begin{enumerate}[leftmargin=*,label=\rm{(\alph*)}]
\item\label{item:quotient-nilpotent-commuting} the elements of $\cF$  commute with each other;

\item\label{item:quotient-nilpotent} for any $M\in \lmod{A}$, $\dim M < \infty$, any $x \in\cF$ acts nilpotently on $M$;

\item\label{item:quotient-nilpotent-normal} $\cF$ is normal, i.~e.  the vector subspace $I$  of $A$ generated by $\cF$ satisfies
\begin{align}\label{eq:left-twosided}
A I =I A.
\end{align}
\end{enumerate}

Let  $S \in \Irr A$. Then the representation $\rho: A\to \End S$ factorizes  through $A /  IA$.
Thus the projection $A \to A/IA$ induces a bijection
\begin{align*}
\Irr A \simeq \Irr A/IA.
\end{align*}
\end{lemma}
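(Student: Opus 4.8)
The plan is to prove that $IA$ (equivalently $AI$, by hypothesis \ref{item:quotient-nilpotent-normal}) is a two-sided ideal annihilating every finite-dimensional simple $A$-module, and then to transport $\Irr A$ to $\Irr A/IA$ via the quotient map. That $IA$ is a two-sided ideal is immediate from \eqref{eq:left-twosided}: it is a right ideal by construction, and $A(IA) = (AI)A = (IA)A = IA$ shows it is also a left ideal. So $A/IA$ is an algebra and we have a surjection $q\colon A \to A/IA$.

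\smallbreak
Next I would show that for $S \in \Irr A$ the representation $\rho\colon A \to \End S$ kills $I$, hence $IA$. Fix $x \in \cF$. By hypothesis \ref{item:quotient-nilpotent} the operator $\rho(x)$ is nilpotent on the finite-dimensional space $S$, so $\ker \rho(x) \neq 0$. The key point is that $\ker \rho(x)$ is an $A$-submodule: if $a \in A$ and $s \in \ker\rho(x)$, then using normality write $xa = \sum_\ell b_\ell y_\ell$ with $y_\ell \in \cF$ and $b_\ell \in A$ (this is exactly what \eqref{eq:left-twosided} provides, after noting $xa \in AI = IA$), and then $\rho(x)\rho(a) s = \rho(xa)s = \sum_\ell \rho(b_\ell)\rho(y_\ell) s$; but by \ref{item:quotient-nilpotent-commuting} each $y_\ell$ commutes with $x$, so $\rho(y_\ell)$ preserves $\ker\rho(x)$ — wait, that is not quite what is needed. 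The cleaner route: since the elements of $\cF$ commute, $\rho(\cF)$ is a commuting family of nilpotent operators on $S$, hence simultaneously strictly triangularizable; let $S_0 = \bigcap_{x\in\cF}\ker\rho(x) \neq 0$. I claim $S_0$ is an $A$-submodule. For $a \in A$, $x \in \cF$, $s \in S_0$: $\rho(x)\rho(a)s = \rho(xa)s$, and $xa \in IA = AI$, so $xa = \sum_\ell a_\ell z_\ell$ with $z_\ell \in I$ a linear combination of elements of $\cF$; thus $\rho(xa)s = \sum_\ell \rho(a_\ell)\rho(z_\ell)s = 0$ since $s \in S_0$ and $\rho(z_\ell)$ is a linear combination of the $\rho(y)$, $y\in\cF$, all of which kill $s$. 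Hence $\rho(a)s \in \ker\rho(x)$ for every $x$, i.e. $\rho(a)s \in S_0$. So $S_0$ is a nonzero submodule, and by simplicity $S_0 = S$; therefore $\rho$ annihilates $I$, hence $IA$, and $\rho$ factors through $A/IA$.

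\smallbreak
Finally, the bijection. Pulling modules back along $q\colon A \to A/IA$ gives a fully faithful exact functor $q^*\colon \lmod{A/IA} \to \lmod{A}$ whose essential image is precisely the $A$-modules on which $IA$ acts by zero; it sends simples to simples and preserves finite-dimensionality, so it induces an injection $\Irr A/IA \hookrightarrow \Irr A$. The previous paragraph shows this injection is surjective: every $S\in\Irr A$ has $IA \subseteq \ker\rho$, so $S$ lies in the essential image, i.e. $S \cong q^*(\bar S)$ for a (necessarily simple, finite-dimensional) $A/IA$-module $\bar S$. Hence $\Irr A \simeq \Irr A/IA$, as claimed.

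\smallbreak
The only subtle point — and the one I would write out carefully rather than the formal bookkeeping — is the stability of $S_0 = \bigcap_{x\in\cF}\ker\rho(x)$ under the $A$-action, where one genuinely needs all three hypotheses together: commutativity \ref{item:quotient-nilpotent-commuting} to make $\rho(\cF)$ simultaneously triangularizable and to recognize $\rho(z_\ell)s = 0$, nilpotency \ref{item:quotient-nilpotent} to guarantee $S_0\neq 0$, and normality \ref{item:quotient-nilpotent-normal} to rewrite $xa$ as an element of $AI$. Everything else is standard Hopf-free module theory.
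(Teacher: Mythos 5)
Your proof is correct, but it establishes the key annihilation statement by a different mechanism than the paper. You produce the nonzero subspace $S_0=\bigcap_{x\in\cF}\ker\rho(x)$ (commutativity plus nilpotency of the commuting family $\rho(\cF)$ guarantee $S_0\neq 0$), show it is an $A$-submodule by rewriting $xa\in IA=AI$ via normality, and conclude $S_0=S$ by simplicity, i.e. $\rho(I)=0$ and hence $\rho(IA)=0$. The paper instead works with ideals of $\rho(A)$: by hypotheses (a) and (b) the span $\widetilde I=\rho(I)$ satisfies $\widetilde I^{\,r}=0$ for some $r$, normality gives $\rho(IA)^{r}=\rho(AI^{r})=0$, so $\rho(IA)$ is a nilpotent ideal and lies in the Jacobson radical of $\rho(A)$, which vanishes because $\rho(A)=\End S$ by Burnside's theorem. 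Your route is more elementary and slightly more general: it avoids Burnside (hence does not lean on $\ku$ being algebraically closed), at the cost of the explicit common-kernel argument for commuting nilpotents; the paper's route is shorter once Burnside is granted, and both use the three hypotheses at exactly the same points (nilpotency to get something nonzero or nilpotent, commutativity to control $\rho(\cF)$, normality to move $I$ past $A$). Two editorial remarks: delete the abandoned first attempt (the passage ending in ``that is not quite what is needed''), since only the $S_0$ argument is actually used; the final paragraph on the bijection is the standard correspondence between $\Irr A/IA$ and the simple finite-dimensional $A$-modules annihilated by $IA$, and is fine as stated.
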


\pf  Let $\widetilde{I} = \rho(I)$ and $\cI = \rho(AI) =  \rho(IA) $. By  \ref{item:quotient-nilpotent-commuting} and \ref{item:quotient-nilpotent} 
there exists $r\in \N$  such that $\widetilde{I}^r = 0$. Then
$\cI^r = 0$ by \eqref{eq:left-twosided}. Hence the  ideal $\cI$ is contained in the Jacobson radical of $\rho(A)$. 
But by Burnside's Theorem \cite[(3.3.2)]{CR} $\rho(A) = \End S$ since $S$ is simple. So $IA$ acts by $0$ on $S$.
\epf

%\begin{remark}
%Given $\bn = (n_a)_{a\in \cF} \in \N^{\cF}$ let $A_{\bn}: A/\langle a^{n_a}: a\in \cF \rangle$ and let $p_{\bn}: A \to A_{bn}$
%be the natural projection.
%Let $M\in \lmod{A}$, $\dim M < \infty$. By \ref{item:quotient-nilpotent} the representation $\rho_M: A\to \End M$
%factorizes through $p_{\bn}$ for some  $\bn  \in \N^{\cF}$.
%\end{remark}

Recall the map $\pi: \D \to U(\spl_2(\ku))$ from \eqref{eq:iso-hopf}.

\begin{theorem}\label{thm:simple-D}
The map $\pi$ induces a bijection
$\Irr \D \simeq \Irr U(\spl_2(\ku))$.
\end{theorem}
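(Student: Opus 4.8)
The plan is to combine the machinery already set up in the excerpt. By Corollary~\ref{coro:every-finite-D-simple-is-quotient}, every $S\in\Irr\D$ is a quotient of some Verma module $\verma_{1,c}$, so in particular $g-1$ acts nilpotently on every finite-dimensional simple module. By Proposition~\ref{prop:g-1-x-u-D-nilpotent}, the elements $g-1$, $x$ and $u$ all act nilpotently on any finite-dimensional $\D$-module. The idea is to feed the family $\cF = \{g-1, x, u\}$ into Lemma~\ref{lema:quotient-nilpotent}.

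First I would verify the three hypotheses of Lemma~\ref{lema:quotient-nilpotent} for $\cF$. Condition~\ref{item:quotient-nilpotent} is exactly Proposition~\ref{prop:g-1-x-u-D-nilpotent}. Condition~\ref{item:quotient-nilpotent-commuting} requires that $g-1$, $x$, $u$ commute pairwise: from the defining relations \eqref{eq:def-jordan1}--\eqref{eq:def-jordan3} we have $gx=xg$, $ug=gu$ and $ux=xu$, so this is immediate; indeed these are precisely the generators of the commutative Hopf subalgebra $\Nuc = \ku\langle x,u,g^{\pm1}\rangle$. For condition~\ref{item:quotient-nilpotent-normal}, let $I = \ku\{g-1, x, u\}$; I must show $\D I = I\D$. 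This is the main computational point: one checks, using the relations of Definition~\ref{def:double-jordan}, that commuting any generator of $\D$ past each of $g-1$, $x$, $u$ produces elements again lying in $I\D$ (equivalently $\D I$). For instance $\zeta x = x\zeta + x = x(\zeta+1)\in x\D$, $vx = xv + (1-g) + xu \in I\D$, $uy = yu + (1-g)\in I\D$, $y(g-1)=(g-1)y + $ (a multiple of $x$)$\,\cdot(\text{something})$ via $gy = yg+xg$, and so on; since $\Nuc = \ku\langle x,u,g^{\pm1}\rangle$ is a Hopf subalgebra one expects $I\subseteq \Nuc^+$ to generate a normal (even Hopf) ideal, which is consistent with the identification $\D/\D\Nuc^+\simeq U(\spl_2(\ku))$ recalled after \eqref{eq:iso-hopf}. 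The cleanest route is actually to observe that $\D I = \D\Nuc^+$ and $I\D = \Nuc^+\D$ as two-sided ideals, because $\Nuc^+$ is generated as a one-sided ideal of $\Nuc$ by $g-1, x, u$ (using that $\Nuc$ is the commutative algebra $\cO(\Gb)$), and $\Nuc^+$ generates a two-sided ideal of $\D$ by normality of the Hopf subalgebra $\Nuc$.

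Granting the three hypotheses, Lemma~\ref{lema:quotient-nilpotent} yields a bijection $\Irr\D \simeq \Irr(\D/I\D)$. It remains to identify $\D/I\D$ with $U(\spl_2(\ku))$. Since $\D I = \D\Nuc^+$ as argued above, we get $\D/I\D = \D/\D\Nuc^+ \simeq U(\spl_2(\ku))$, the isomorphism being induced by $\pi$ as recorded in \eqref{eq:iso-hopf}. Chasing through the identifications, the composite $\Irr\D\simeq\Irr(\D/\D\Nuc^+)\simeq\Irr U(\spl_2(\ku))$ is precisely the map induced by $\pi$, which proves the theorem.

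The main obstacle is step two, the normality check $\D I = I\D$: one must make sure that the reduction to $\D\Nuc^+ = \Nuc^+\D$ is legitimate, i.e.\ that $I$ really generates the same two-sided ideal as $\Nuc^+$. This follows once one knows $\Nuc^+ = I\Nuc = \Nuc I$ (a statement purely about the commutative algebra $\Nuc\simeq\cO(\Gb)$, where $\Gb=(\Gb_a\times\Gb_a)\rtimes\Gb_m$: the augmentation ideal is generated by the coordinates $g-1, x, u$) together with the fact that $\Nuc$ being a Hopf subalgebra forces $\D\Nuc^+ = \Nuc^+\D$. Everything else is bookkeeping with results already proved in the excerpt.
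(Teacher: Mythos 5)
Your proposal is correct and follows essentially the same route as the paper: both apply Lemma \ref{lema:quotient-nilpotent} to the family $\cF=\{g-1,x,u\}$, with conditions \ref{item:quotient-nilpotent-commuting} and \ref{item:quotient-nilpotent-normal} read off from the defining relations and condition \ref{item:quotient-nilpotent} supplied by Proposition \ref{prop:g-1-x-u-D-nilpotent}. The additional detail you give, identifying $I\D$ with $\D\Nuc^+$ so that $\D/I\D\simeq U(\spl_2(\ku))$ via $\pi$, is exactly the step the paper leaves implicit, and your verification of it is sound.
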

\begin{proof}
Let $\cF = \{x, u, g-1\}$. By the defining relations of $\D$,
$\cF$ satisfies \ref{item:quotient-nilpotent-commuting} and \ref{item:quotient-nilpotent-normal}.  
By Proposition \ref{prop:g-1-x-u-D-nilpotent} $\cF$ satisfies \ref{item:quotient-nilpotent}. Thus 
Lemma \ref{lema:quotient-nilpotent} applies.
\end{proof}

\begin{remark}\label{rem:highest-weigth}  
For $n\in \N_0$, let $L_n\in \Irr \D$ correspond to the simple $\spl_2(\ku)$-module of highest weight $n$. 
Then $L_n$ has a basis $t_0,\dots,t_n$ where  the action is given by
\begin{align}\label{eq:D-action-Vc}
\begin{aligned}
y \cdot t_i &= t_{i+1}, & v \cdot t_i &= \frac{i}{2} (n-i+1) t_{i-1}, & \zeta\cdot t_i&= -\frac{1}{2}(n-2i) t_i,\\
x \cdot t_i &= 0, & u \cdot t_i &= 0, & g\cdot t_i &= t_i.
\end{aligned}
\end{align}
%These form a complete family of  isomorphism classes of finite-dimensional simple modules of $\D$.
It can be shown that $L_n$ can be presented as quotient of the Verma module $\verma_{1,n}$. Indeed
let $\mathtt{M}_n$ be the Verma module over $\spl_2(\ku)$ of highest weight $n$. 
Then $\mathtt{M}_n \simeq K_n\coloneqq \verma_{1,n}/\widetilde{M}_n$ where 
$\widetilde{M}_n\coloneqq x \verma_{1,n} = \langle z_{1,n}^{(0,1)} \rangle \leq \verma_{1,n}$.
\end{remark}

\begin{coro}
The Hopf algebra $\D$ is spherical with pivot $g^{-1}$.
\end{coro}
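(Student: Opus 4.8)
The plan is to check the two requirements in the definition of a spherical Hopf algebra for the group-like $g^{-1}$: first that $g^{-1}$ is a pivot, i.e.\ $\Ss^2$ equals conjugation by $g^{-1}$, namely $\Ss^2(a)=g^{-1}ag$ for all $a\in\D$; and then that the resulting pivotal structure is spherical, i.e.\ $\Tr\big(\phi\circ\rho_M(g)\big)=\Tr\big(\phi\circ\rho_M(g^{-1})\big)$ for every finite-dimensional $M\in\lmod{\D}$ with associated representation $\rho_M\colon\D\to\End M$ and every $\phi\in\End_\D(M)$.

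For the pivot property, both $\Ss^2$ and $a\mapsto g^{-1}ag$ are algebra automorphisms of $\D$, so it suffices to compare them on the generators $g^{\pm1},\zeta,u,x,y,v$. This is immediate for $g,\zeta,u$, which $\Ss^2$ fixes and which commute with $g$, and for $x,y$, where $\Ss(x)=-g^{-1}x$ and $\Ss(y)=-g^{-1}y$ give $\Ss^2(x)=g^{-1}xg$ and $\Ss^2(y)=g^{-1}yg$ at once. The only case requiring a computation is $v$: from $\Delta(v)=v\ot 1+1\ot v+\zeta\ot u$ the antipode axiom forces $\Ss(v)=-v+\zeta u$, whence $\Ss^2(v)=-\Ss(v)+\Ss(\zeta u)=v-\zeta u+u\zeta=v+u$ using the relation $u\zeta=\zeta u+u$; on the other hand $vg=gv+gu$ gives $g^{-1}vg=v+u$, so $\Ss^2(v)=g^{-1}vg$ as needed.

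For sphericity I would invoke Proposition \ref{prop:g-1-x-u-D-nilpotent}: on any finite-dimensional $M$ the operator $\rho_M(g-1)=\rho_M(g)-\id_M$ is nilpotent, so $\rho_M(g)$ and $\rho_M(g^{-1})=\rho_M(g)^{-1}$ are commuting unipotent operators. Hence their difference $\rho_M(g)-\rho_M(g^{-1})$ is nilpotent, and it commutes with $\phi$ because $\phi$ is $\D$-linear. Therefore $\phi\circ\big(\rho_M(g)-\rho_M(g^{-1})\big)$ is a product of two commuting operators one of which is nilpotent, hence is nilpotent and has trace $0$. This is exactly the sphericity identity, and the corollary follows.

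I do not expect a genuine obstacle: the statement reduces immediately to Proposition \ref{prop:g-1-x-u-D-nilpotent} (which, through Corollary \ref{coro:every-finite-D-simple-is-quotient}, ultimately rests on the analysis culminating in Theorem \ref{thm:simple-D}; alternatively one sees $\rho_M(g)$ is unipotent by passing to composition factors, on which $g$ acts trivially) together with the elementary antipode computation above. The only points demanding care are fixing the precise conventions for ``pivot'' and ``spherical'' and the sign-sensitive evaluation of $\Ss^2(v)$.
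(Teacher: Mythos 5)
Your proof is correct, and the second half takes a genuinely different route from the paper. The pivot verification coincides with the paper's (a direct check of $\Ss^2 = \gamma_{g^{-1}}$ on generators, the only nontrivial case being $v$, which you handle correctly: $\Ss(v)=-v+\zeta u$, hence $\Ss^2(v)=v+u=g^{-1}vg$). For sphericity, the paper invokes \cite[Prop. 2.1]{aagtv} to reduce the trace identity to simple modules, and then concludes from $\End_\D(V)\simeq\ku$ together with the explicit action \eqref{eq:D-action-Vc}, where $g$ acts as the identity, so that $\Tr_V(g)=\Tr_V(g^{-1})=\dim V$. You instead work with an arbitrary finite-dimensional $M$: Proposition \ref{prop:g-1-x-u-D-nilpotent} makes $\rho_M(g)$ unipotent, hence $\rho_M(g)-\rho_M(g^{-1})$ is nilpotent and commutes with any $\phi\in\End_\D(M)$, so $\phi\circ\bigl(\rho_M(g)-\rho_M(g^{-1})\bigr)$ is nilpotent and traceless. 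Your argument is more self-contained (no appeal to the reduction-to-simples result of \cite{aagtv} and no use of the explicit basis of the simples, only of the already established nilpotence proposition), while the paper's version is shorter given the machinery it has on hand; both ultimately rest on the same structural fact that $g$ acts unipotently on finite-dimensional modules, which is why the two computations of the traces agree.
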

\begin{proof}
By direct calculation in the generators we see that $\Ss^2(h) = g^{-1} h g$ for every $h\in\D$. It remains to show that for every $V\in\lmod{\D}$ with $\dim V < \infty$,  $\Tr_V(fg) = \Tr_V(fg^{-1})$ for every $f\in\End_\D(V)$. By \cite[Prop. 2.1]{aagtv} we only need to consider $V\in\Irr \D$. Since $\End_\D(V)\simeq \ku$, and $\Tr_V(g) = \Tr_V(g^{-1}) = \dim V$ by \ref{eq:D-action-Vc}, the claim follows.
\end{proof}

\section{A localization  of the double of the Jordan plane}\label{sec:localization}
\subsection{Weyl algebras and iterated Ore extensions}\label{subsec:localization-weyl}
We refer to \cite{MCR} for the notations and basic notions used here.
Let $R$ be a  commutative ring. Recall that  the Weyl algebra $A_1(R)$
is the $R$-algebra generated  by $p$ and $q$  satisfying $pq-qp=1$. Alternatively
it can be described as the Ore extension $A_1(R) \simeq R[q][p\,;\,\partial_q]$.
We shall also consider the algebra $A'_1(R)=R[q^{\pm 1}][p\,;\,\partial_q]$, which is the localization of $A_1(R)$ with respect
of the multiplicative set generated by $q$. Observing that $(qp)q-q(qp)=q$, we have an alternative description
of $A'_1(R)$ as a Laurent extension:
\begin{equation}
A'_1(R)=R[q^{\pm 1}][p\,;\,\partial_q]=R[qp][q^{\pm 1}\,;\,\sigma^{\pm 1}] 
\end{equation}with $\sigma$ the $R$-automorphism of $R[qp]$ defined by $\sigma(qp)=qp-1$.

The Weyl algebras $A_n(R)$ and their localizations are defined similarly for  $n \in \N$; then  
$A_n(R)=R[q_1,\ldots,q_n][p_1\,;\,\partial_{q_1}]\ldots[p_1\,;\,\partial_{q_1}]$ and 
\begin{align}\label{eq:weyl} 
\begin{aligned}
A'_n(R)&=R[q_1^{\pm 1},\ldots,q_n^{\pm 1}][p_1\,;\,\partial_{q_1}]\ldots[p_n\,;\,\partial_{q_n}]\\
&=R[q_1p_1,\,\ldots,q_np_n][q_1^{\pm 1}\,;\,\sigma_1^{\pm 1}]\ldots[q_n^{\pm 1}\,;\,\sigma_n^{\pm 1}].
\end{aligned}
\end{align}

%\end{notation}

The proof of the following Lemma is straightforward.

\begin{lemma}\label{lema:Dore} The algebra $\D$ can be described as an iterated Ore extension:
\begin{equation}\label{eq:Dore}
\D \simeq \underbrace{\ku[g^{\pm 1},x,u]}_{\Nuc \text{ commutative}}[y\,;\,d][\zeta\,;\,\delta][v\,;\,\sigma,\deriv] 
\end{equation}
where $d$ is the derivation of $\Nuc:=\ku[g^{\pm 1},x,u]$, $\delta$ is the derivation of $\Nuc[y\,;\,d]$,
$\sigma$ is the automorphism and $\deriv$ is the $\sigma$-derivation of $\Nuc[y\,;\,d][\zeta\,;\,\delta]$
defined by:
\begin{equation}\label{eq:valuesD}\begin{matrix}
d(x) = -\frac12x^2, & \delta(x) = x,\hfill & \sigma(x) = x,\hfill & \deriv(x)=1-g + xu,\hfill \\
d(u)=g-1,\hfill&\delta(u)=-u,\hfill &\sigma(u)=u,\hfill &\deriv(u)= -\frac12u^2,\hfill \\
d(g)=-xg,\hfill&\delta(g)=0,\hfill &\sigma(g)=g,\hfill &\deriv(g)= gu,\hfill \\
&\delta(y)=y,\hfill &\sigma(y)=y,\hfill & \deriv(y)=-g\zeta + yu,\hfill\\
&             &\sigma(\zeta)=\zeta+1,\hfill &\deriv(\zeta)=0.  \qed\hfill\\
\end{matrix} \end{equation}
\end{lemma}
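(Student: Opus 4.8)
The plan is to verify directly that the data in \eqref{eq:valuesD} defines a valid iterated Ore extension and that the resulting algebra is isomorphic to $\D$. The strategy has two halves: an internal consistency check on the Ore extension side, and a comparison with the presentation of $\D$ in Definition \ref{def:double-jordan}.

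First I would check that each layer is well defined. The base $\Nuc = \ku[g^{\pm 1},x,u]$ is commutative, so there is nothing to verify there. For $\Nuc[y\,;\,d]$ I must confirm that the prescribed values $d(x)=-\tfrac12 x^2$, $d(u)=g-1$, $d(g)=-xg$, together with $d(g^{-1})$ forced by the Leibniz rule, actually extend to a derivation of the polynomial ring $\Nuc$; since $\Nuc$ is a localization of a polynomial ring, a derivation is determined by arbitrary values on the generators subject only to compatibility with $g g^{-1}=1$, so this amounts to the identity $d(g^{-1}) = -g^{-2}d(g) = xg^{-1}$, which is consistent. For the next layer $\Nuc[y\,;\,d][\zeta\,;\,\delta]$, the map $\delta$ must be a derivation of $\Nuc[y\,;\,d]$; here the only relation to respect is $yx - xy = d(x)$, i.e. $[y,x] = -\tfrac12 x^2$, so I need $\delta([y,x]) = \delta(-\tfrac12 x^2)$, that is $[\delta(y),x] + [y,\delta(x)] = -x\delta(x)$. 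Plugging $\delta(y)=y$, $\delta(x)=x$ gives $[y,x] + [y,x] = -x\cdot x$, i.e. $-x^2 = -x^2$: consistent. The same computation must be repeated for the brackets $[y,u]$ and $[y,g]$ in the algebra $\Nuc[y;d]$ (which are $d(u)$ and $d(g)$ respectively), and these are again short. For the top layer $[v\,;\,\sigma,\deriv]$ I must check that $\sigma$ is an algebra automorphism of $\mathcal{R}:=\Nuc[y;d][\zeta;\delta]$ and that $\deriv$ is a $\sigma$-derivation, meaning $\deriv(ab) = \sigma(a)\deriv(b) + \deriv(a)b$; it suffices to check these on the generators and on the defining relations of $\mathcal{R}$ (the Jordan-type relation $[y,x]=-\tfrac12 x^2$, the commutators of $\zeta$ with $x,u,g,y$, and $[g,g^{-1}]=0$). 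That $\sigma$ is an automorphism fixing $x,u,g,y$ and sending $\zeta \mapsto \zeta+1$ is clear since shifting $\zeta$ commutes with all those relations. The $\sigma$-derivation check for $\deriv$ is the most laborious piece but each individual identity (e.g. $\deriv([\zeta,x]-x) = 0$ using $\deriv(x)=1-g+xu$, $\deriv(\zeta)=0$, $\sigma(\zeta)=\zeta+1$) reduces to a one-line computation in $\D$.

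Second, I would exhibit the isomorphism. The Ore extension $E := \Nuc[y;d][\zeta;\delta][v;\sigma,\deriv]$ comes with generators $g^{\pm1},x,u,y,\zeta,v$ and, by construction, the relations $gx=xg$, $gu=ug$, $ux=xu$, $[y,x]=-\tfrac12 x^2$, $[y,u]=g-1$, $[y,g]=-xg$ (i.e. $gy = yg + xg$), $[\zeta,x]=x$, $[\zeta,u]=-u$, $[\zeta,g]=0$, $[\zeta,y]=y$, $[v,x]=1-g+xu$, $[v,u]=-\tfrac12 u^2$, $[v,g]=gu$, $[v,y]=-g\zeta+yu$, $[v,\zeta]=0$. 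Comparing term by term with \eqref{eq:def-jordan1}--\eqref{eq:def-jordan3} (rewriting each $\D$-relation as a commutator, e.g. $vu = uv - \tfrac12 u^2$ becomes $[v,u]=-\tfrac12 u^2$, and $vy = yv - g\zeta + yu$ becomes $[v,y] = -g\zeta + yu$), one sees these are exactly the defining relations of $\D$. Hence the assignment on generators defines an algebra map $\D \to E$; it is surjective since the generators hit all of $E$, and injective because the iterated-Ore-extension PBW basis $\{x^n y^r g^m \zeta^k u^i v^j\}$ of $E$ coincides with the stated PBW basis $B$ of $\D$, so a nonzero element of the kernel would violate linear independence of $B$. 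Therefore $\D \cong E$.

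The main obstacle is purely bookkeeping: verifying that $\deriv$ is a genuine $\sigma$-derivation on all of $\mathcal{R}=\Nuc[y;d][\zeta;\delta]$, i.e. that the proposed values on generators are compatible with every defining relation of $\mathcal{R}$. There is no conceptual difficulty — each check is a finite computation using the relations \eqref{eq:def-jordan1}--\eqref{eq:def-jordan3} — but one must be careful to use the noncommutative (twisted) Leibniz rule in the right order and to treat $g^{-1}$ correctly. This is exactly why the statement is flagged as having a straightforward proof.
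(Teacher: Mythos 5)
Your proposal is correct and is precisely the direct verification the paper leaves implicit (it offers no proof, calling the lemma straightforward): check that the data \eqref{eq:valuesD} define derivations, an automorphism and a $\sigma$-derivation compatible with the relations at each layer, match the resulting Ore relations with \eqref{eq:def-jordan1}--\eqref{eq:def-jordan3}, and conclude injectivity/surjectivity via the PBW basis. The only point to polish is that the canonical basis of the iterated Ore extension is $\{g^m x^n u^i\, y^r \zeta^k v^j\}$ rather than the ordering used in $B$; a triangular change of basis (using $yg = gy - xg$ and $\zeta u = u\zeta - u$, which only lower the $y$- resp.\ $\zeta$-degree) shows the monomials in $B$'s ordering are also a basis of the Ore extension, after which your injectivity argument goes through verbatim.
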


\begin{coro}\label{coro:algebraic-properties}
The algebra $\D$ is strongly noetherian, AS-regular and Cohen-Macaulay.
\end{coro}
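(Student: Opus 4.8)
\textbf{Proof plan for Corollary \ref{coro:algebraic-properties}.}

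The plan is to deduce each of the three properties from the iterated Ore extension description in Lemma \ref{lema:Dore}, using standard transfer theorems for Ore extensions. The base ring is $\Nuc = \ku[g^{\pm 1}, x, u]$, which is a commutative Laurent polynomial ring over the field $\ku$; in particular it is (strongly) noetherian, AS-regular, and Cohen--Macaulay, with global dimension $3$ and GK-dimension $3$. Then I would add the variables $y$, $\zeta$, $v$ one at a time, at each stage invoking the appropriate Ore-extension stability result.

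First, strong noetherianity: $\Nuc$ is affine commutative over $\ku$, hence strongly noetherian; an Ore extension $R[t;\sigma,\partial]$ of a strongly noetherian ring $R$ is again strongly noetherian (the key point being that $R[t;\sigma,\partial] \otimes_\ku A$ is a suitable Ore extension of $R\otimes_\ku A$ for any commutative affine $\ku$-algebra $A$, and one uses the Hilbert basis theorem for Ore extensions). Applying this three times to \eqref{eq:Dore} gives that $\D$ is strongly noetherian. Second, AS-regularity: each of the three Ore extension steps in \eqref{eq:Dore} raises global dimension by exactly $1$ and, because we are adding a genuine variable (the leading term is not a zero divisor), preserves the Artin--Schelter Gorenstein condition and the finiteness of global dimension; since $\Nuc$ is AS-regular of global dimension $3$, $\D$ is AS-regular of global dimension $6$. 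Third, Cohen--Macaulayness: using the ascending filtration with $\gr \D \simeq \ku[X_1,\dots,X_5,T^{\pm 1}]$ already recorded among the basic properties of $\D$, the GK-dimension of $\D$ equals $6$, matching its global dimension; combined with AS-regularity (equivalently, by the fact that an Ore extension of a Cohen--Macaulay algebra by an automorphism and/or derivation is again Cohen--Macaulay when GK-dimension behaves additively), $\D$ is GK-Cohen--Macaulay.

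The main obstacle is bookkeeping rather than conceptual: one must check at each Ore step that the hypotheses of the cited stability theorems are met — in particular that $\sigma$ in the last step is indeed an automorphism of $\Nuc[y;d][\zeta;\delta]$ (which is exactly the content of Lemma \ref{lema:Dore}) and that passing to $\D\otimes_\ku A$ still yields an iterated Ore extension over $\Nuc\otimes_\ku A$ so that the "strong" versions apply. I would cite the relevant results from \cite{MCR} for noetherianity and global dimension of Ore extensions, together with the literature on AS-regularity and Auslander-Gorenstein/Cohen--Macaulay behavior under Ore extensions; no new computation beyond \eqref{eq:valuesD} is needed. Alternatively, for the Cohen--Macaulay and AS-regular conclusions one may argue directly through the filtered–graded comparison $\gr\D\simeq\ku[X_1,\dots,X_5,T^{\pm1}]$, since that associated graded ring is visibly AS-regular and Cohen--Macaulay and both properties lift from $\gr\D$ to $\D$.
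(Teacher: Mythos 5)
Your plan is essentially the paper's own proof: the paper deduces all three properties directly from the iterated Ore extension presentation of Lemma \ref{lema:Dore}, citing \cite[Proposition 4.10]{ASZ} for strong noetherianity, \cite[Proposition 2]{AST} for AS-regularity, and \cite[Lemma 5.3]{ZZ} for the Cohen--Macaulay property. Your sketch matches this route (with the filtered--graded comparison as an optional variant), so apart from pinning down the precise references there is nothing to add.
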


\pf  $\D$  is strongly noetherian by \cite[Proposition 4.10]{ASZ}; AS-regular by \cite[Proposition 2]{AST} 
and Cohen-Macaulay by \cite[Lemma 5.3]{ZZ}.
\epf

\subsection{Localizing}\label{subsec:localization}
We consider the following elements of the  subalgebra $\Nuc$:
\begin{equation}\label{eq:defqz}
q:=ux + 2(1+g)\quad\text{and}\quad z:=q^2g^{-1}.
\end{equation}

\begin{lemma}\label{lema:xqf} 
\begin{enumerate}[label=\rm{(\roman*)}]
\item \label{item:adx-nilpot} $x$  is ad-locally nilpotent in $\D$;
\item \label{item:q-normal} $q$  is normal in $\D$,
\item\label{item:z-central} $z$  is central in $\D$.
\end{enumerate}\end{lemma}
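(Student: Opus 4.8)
The plan is to prove the three assertions in turn, using the Ore-extension presentation \eqref{eq:Dore}--\eqref{eq:valuesD} and the $\Z$-grading \eqref{eq:D-grading}, since each statement is a finite computation once one has the right variables to work with.

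For \ref{item:adx-nilpot}, I would check that $\ad x$ kills the generators quickly. From the relations \eqref{eq:def-jordan1}--\eqref{eq:def-jordan3} one has $[x,g]=0$, $[x,u]=0$, $[x,\zeta]=-x$ (so $(\ad x)^2\zeta=0$), $[x,y]=\tfrac12 x^2$ and $[x,v]=-(1-g)-xu=g-1-xu$. Thus $(\ad x)$ applied to $y$ gives a multiple of $x^2$, which is $\ad x$-nilpotent since $[x,x^2]=0$; and $(\ad x)$ applied to $v$ lands in $\ku\langle g,x,u\rangle=\Nuc$, on which $\ad x$ acts trivially because $\Nuc$ is commutative. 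Since $\ad x$ is a derivation and the generators $g^{\pm1},x,u,y,\zeta,v$ are each $\ad x$-nilpotent, and a derivation that is locally nilpotent on a generating set is locally nilpotent on the whole algebra (the nilpotency degree on a product of generators being bounded by the sum of the individual degrees, via the Leibniz rule), $x$ is $\ad$-locally nilpotent on all of $\D$.

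For \ref{item:q-normal}, I want to show $\D q = q\D$, equivalently that conjugation-type relations $q\,a \in \D q$ and $a\,q\in q\D$ hold for each generator $a$. Since $q=ux+2(1+g)\in\Nuc$ and $\Nuc$ is commutative, $q$ commutes with $g^{\pm1},x,u$. It remains to compute $[q,y]$, $[q,\zeta]$ and $[q,v]$ and check each is a (left or right) $\D$-multiple of $q$; the natural guess, matching $z=q^2g^{-1}$ being central, is that $\zeta$ and $v$ rescale $q$ up to grading shift while $y$ does something compatible. Concretely I expect relations of the shape $q y = y q + (\text{something})\cdot q$ and $q\zeta=\zeta q$ (as $\zeta$ has degree $0$ and $q$ is a sum of a degree-$0$ and a degree-$0$ term — indeed $ux$ has degree $0$), so the only real computations are $[q,y]$ and $[q,v]$, using $[u,y]=1-g$, $[x,y]=\tfrac12x^2$, $[x,v]=g-1-xu$, $[u,v]=-\tfrac12u^2$, $[g,y]=-xg$... wait, $[g,y]=0$ and instead $\zeta y=y\zeta+y$; and $[g,v]=0$, $vg=gv+gu$ gives $[g,v]=gu$. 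Assembling these, $[q,v]=[ux,v]+2[g,v]=u[x,v]+[u,v]x+2gu=u(g-1-xu)-\tfrac12u^2x+2gu = ug-u-uxu-\tfrac12u^2x+2gu$; using $ux=xu$ and $q=ux+2+2g$ one should be able to factor this as $-\tfrac12 u q$ or similar. The main obstacle here is purely bookkeeping: getting these commutators into the precise form $(\text{element})\,q$ on the nose, which requires care with ordering but no new ideas.

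For \ref{item:z-central}, the cleanest route is: $z=q^2g^{-1}$, and from \ref{item:q-normal} we know $aq=q\,\phi(a)$ for some algebra automorphism-like twist; then $aq^2 = q\phi(a)q = q^2\phi^2(a)$ and $az = a q^2 g^{-1} = q^2\phi^2(a)g^{-1}$, so $z$ is central iff $q^2\phi^2(a)g^{-1}=q^2g^{-1}a$, i.e. iff $\phi^2(a)g^{-1}=g^{-1}a$, i.e. iff conjugation by $q^2g^{-1}$ is trivial. Since conjugation by $g^{-1}$ is a known (diagonalizable, grading-type) automorphism — explicitly $g^{-1}yg=y-x$-type formulas from \eqref{eq:comm-y-g} with $n=-1$, and $g$ central on $\Nuc$ — and conjugation by $q^2$ is the square of the twist from \ref{item:q-normal}, I expect the two to cancel exactly. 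So the proof of \ref{item:z-central} reduces to checking on generators that the twist $\phi$ from \ref{item:q-normal} satisfies $\phi^2 = (\text{conjugation by }g)$, which is immediate on $g^{\pm1},x,u$ (both sides trivial) and a short check on $y,\zeta,v$. Alternatively, and perhaps more safely, I would just verify $z a = a z$ directly for $a\in\{g^{\pm1},x,u,y,\zeta,v\}$ using the formulas already computed in \ref{item:q-normal}; either way it is a finite verification. I would lead with the direct check for $g^{\pm1},x,u$ (trivial, as $z\in\Nuc$ commutative) and then dispatch $y,\zeta,v$, noting that $\deg z = 0$ makes the degree-$0$ generators $g,\zeta$ easy and leaving $y$ (degree $-1$) and $v$ (degree $+1$) as the two substantive lines.
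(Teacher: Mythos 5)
Your overall plan coincides with the paper's proof: for \ref{item:adx-nilpot} check the generators and invoke the Leibniz rule; for \ref{item:q-normal} establish the commutation relations $qy=(y+\tfrac12x)q$, $q\zeta=\zeta q$, $qv=(v-\tfrac12u)q$ and use that $q$ commutes with $\Nuc$; for \ref{item:z-central} observe that conjugation by $q^2$ (namely $y\mapsto y+x$, $v\mapsto v-u$, identity on $\Nuc$ and $\zeta$) agrees with conjugation by $g$, so $z=q^2g^{-1}$ is central. This is exactly the route taken in the paper.

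However, the one computation you actually carry out contains two sign errors, and as written it does not close. From $vu=uv-\tfrac12u^2$ one gets $[u,v]=uv-vu=+\tfrac12u^2$ (not $-\tfrac12u^2$), and from $vg=gv+gu$ one gets $[g,v]=gv-vg=-gu$ (not $+gu$). With your signs the expression simplifies to $3gu-u-\tfrac32u^2x$, which is not a multiple of $q$; with the correct signs, $[q,v]=u(g-1-xu)+\tfrac12u^2x-2gu=-\tfrac12uq$, giving $qv=(v-\tfrac12u)q$ as required. The analogous computation for $y$, which you only announce, is $[q,y]=u[x,y]+[u,y]x+2[g,y]=\tfrac12ux^2+(1-g)x+2xg=\tfrac12xq$, i.e.\ $qy=(y+\tfrac12x)q$. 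Also, for $q\zeta=\zeta q$ the bare remark that both elements have degree $0$ is not by itself a proof; you either need the observation that $\ad\zeta$ acts on the $\Z$-graded algebra $\D$ as minus the degree (so it kills the homogeneous degree-$0$ element $q$), or the one-line check $[\zeta,ux]=u[\zeta,x]+[\zeta,u]x=ux-ux=0$ together with $[\zeta,g]=0$. Once these relations are in place, your reduction of \ref{item:z-central} to ``twist squared equals conjugation by $g$'' goes through verbatim, using $gy=(y+x)g$ and $gv=(v-u)g$ from the defining relations.
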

\begin{proof}
\ref{item:adx-nilpot}: It is clear that  ${\rm ad}_x(x)={\rm ad}_x(u)={\rm ad}_x(g^{\pm 1})={\rm ad}_x^2(\zeta)=0$; then
${\rm ad}_x^2(y)={\rm ad}_x(-\frac12x^2)=0$ and ${\rm ad}_x^2(v)={\rm ad}_x(-xu + g - 1)=0$.
By the Leibniz rule, the claim follows.
%we conclude that, for any  $p\in \D$, there exists $m\in\N_0$ such that 
%$\ad_x^m(p)=0$.

\ref{item:q-normal}: By straightforward calculations we have
\begin{equation}\label{qnormal}
qy=\textstyle(y+\frac12x)q,\quad qv=(v-\frac12u)q,\quad q\zeta=\zeta q.
\end{equation}
Since $q$ commmutes with $u,x,g^{\pm 1}$, we have $\D q = q\D$, i.~e. $q$ is normal.

\ref{item:z-central}: Clearly $g$ commutes with $x$, $u$ and $\zeta$, and  satisfies:
$gy=(y+x)g$, $gv=(v- u)g$. Hence $z =q^2g^{-1}$ is central in $\D$.
\end{proof}

%\begin{notation}
Lemma \ref{lema:xqf} allows us to consider the localization $\D'$ of $\D$ with respect to the multiplicative
set generated by $x$ and $q$.  Let us introduce the element
\begin{equation}\label{eq:deft}
t:=qx^{-1}=u+2(1+g)x^{-1} \in \Nuc':=\D'\cap\Nuc.
\end{equation}
Then $\ku[g^{\pm 1},x^{\pm 1},u]=\ku[g^{\pm 1},x^{\pm 1},t]$. In $\Nuc'$, the element $t$ is invertible with $t^{-1}=q^{-1}x$ 
and the element $z$ is invertible with $z^{-1}=gq^{-2}$.
Then:\begin{equation}\label{C'}
\Nuc'=\ku[g^{\pm 1},x^{\pm 1},t^{\pm 1}]=\ku[g^{\pm 1},q^{\pm 1},t^{\pm 1}]=\ku[z^{\pm 1},q^{\pm 1},t^{\pm 1}].      
\end{equation}
We deduce the following description of $\D'$ as an iterated Ore extension:
\begin{equation}\label{D'ore}
\D'=\underbrace{\ku[z^{\pm 1},q^{\pm 1},t^{\pm 1}]}_{{\Nuc'} \text{ commutative}}[y\,;\,d][\zeta\,;\,\delta][v\,;\,\sigma, \deriv];
\end{equation}
here $d$, $\delta$, $\sigma$, $\deriv$ denote the canonical extensions to $\D'$ of $d$, $\delta$, $\sigma$, $\deriv$ as in  \eqref{eq:Dore}. 
%\end{notation}

\medbreak
Let $R:=\ku[z^{\pm 1},t^{\pm 1}]$. We introduce
\begin{equation}\label{eq:defp}
p:=-2q^{-2}ty.
\end{equation}

\begin{lemma}\label{step1}
The subalgebra  $\ku[z^{\pm 1},q^{\pm 1},t^{\pm 1}][y\,;\,d]=\ku[z^{\pm 1},q^{\pm 1},t^{\pm 1}][p\,;\,\partial_q]$ of $\D'$ 
is isomorphic to $A'_1(R)$.
\end{lemma}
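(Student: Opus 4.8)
The plan is to identify explicitly the generators $p$ and $q$ inside $\D'$ and verify the single relation $pq - qp = 1$, together with the claim that $p = -2q^{-2}ty$ generates, over $\ku[z^{\pm 1},q^{\pm 1},t^{\pm 1}]$, the same subalgebra as $y$. First I would record that $q$ is a unit in $\D'$ (it was inverted in the localization) that commutes with $z$ and $t$ — indeed $z,t \in \Nuc'$ and $q \in \Nuc'$, and $\Nuc'$ is commutative by \eqref{C'}. Hence multiplying $y$ by the unit $-2q^{-2}t$ on the left does not change the subalgebra generated over $\Nuc'' := \ku[z^{\pm 1},q^{\pm 1},t^{\pm 1}]$: we have $y = -\tfrac12 q^2 t^{-1} p$, so $\Nuc''[y;d] = \Nuc''[p]$ as a set, and it remains only to compute how $p$ conjugates $\Nuc''$ and, in particular, the commutator $[p,q]$.

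Next I would compute the action of $\ad_y$ on $q$ (equivalently $d(q)$, the derivation of the Ore extension $\Nuc[y;d]$ evaluated at $q$) using the values in \eqref{eq:valuesD}: from $d(u) = g-1$, $d(x) = -\tfrac12 x^2$, $d(g) = -xg$, and $q = ux + 2(1+g)$, one gets $d(q) = d(u)x + u\,d(x) + 2d(g) = (g-1)x - \tfrac12 u x^2 - 2xg = -\tfrac12 x(ux + 2(1+g)) \cdot(\text{check constants}) $; the point is that $d(q)$ comes out proportional to $xq$, consistent with the first relation in \eqref{qnormal}, $qy = (y+\tfrac12 x)q$, i.e. $[y,q] = yq - qy = -\tfrac12 xq$. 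Then $[p,q] = -2q^{-2}t\,[y,q] = -2q^{-2}t\cdot(-\tfrac12 xq) = q^{-1}tx = q^{-1}q = 1$, using $tx = q$ from \eqref{eq:deft}. This is the crux: the identity $pq - qp = 1$ follows from the normality relation \eqref{qnormal} together with $t = qx^{-1}$. Finally I would check $d(z) = d(t) = 0$ (these follow from $z$ central in $\D$ by Lemma \ref{lema:xqf}\ref{item:z-central}, and from $t = u + 2(1+g)x^{-1}$ together with a short computation giving $d(t)=0$, or more cleanly since $t = qx^{-1}$ and both $q$ and $x$ scale the same way under $\ad_y$) so that $p$ centralizes $R = \ku[z^{\pm 1},t^{\pm 1}]$; hence the subalgebra is $R[q^{\pm 1}][p;\partial_q]$, which is $A'_1(R)$ by the definition in \S\ref{subsec:localization-weyl}.

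The main obstacle is bookkeeping rather than conceptual: one must be careful that $d$ here denotes the derivation of the Ore extension $\Nuc[y;d]$, act with it on the product $q = ux + 2(1+g)$ correctly via the Leibniz rule, and track the scalar $-\tfrac12$ so that the cancellation against $-2q^{-2}t$ produces exactly $1$ and not a scalar multiple. A secondary point to state carefully is that passing from $y$ to $p = -2q^{-2}ty$ is an invertible change of generator over the commutative base $\Nuc''$, so that $\Nuc''[y;d]$ and $\Nuc''[p;\partial_q]$ genuinely coincide as subalgebras of $\D'$; this is immediate because $-2q^{-2}t \in (\Nuc'')^\times$ and $\Nuc''$ is central in neither $\D'$ nor this subalgebra, but $p$ and $y$ differ by a left unit factor lying in $\Nuc''$, which is enough. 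Once $[p,q]=1$ and $[p,R]=0$ are in hand, the isomorphism with $A'_1(R)$ is the universal property of the Laurent/Ore presentation recalled in \eqref{eq:weyl}.
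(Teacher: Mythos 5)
Your proposal is correct and follows essentially the same route as the paper: compute $d(q)=-\tfrac12 xq=-\tfrac12 q^{2}t^{-1}$ and $d(t)=d(z)=0$ from \eqref{eq:valuesD} and Lemma \ref{lema:xqf}\ref{item:z-central}, then pass to $p=-2q^{-2}ty$ (a unit multiple of $y$ over the commutative base) and verify $pq-qp=1$, $[p,z]=[p,t]=0$, identifying the subalgebra with $A'_1(R)$. The constant you flagged does check out: $(g-1)x-\tfrac12 ux^{2}-2xg=-\tfrac12 x\bigl(ux+2(1+g)\bigr)$, consistent with \eqref{qnormal}.
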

\begin{proof} We have $d(z)=0$ by Lemma \ref{lema:xqf}(iii). We  compute using \eqref{eq:valuesD}:
\begin{align*}
d(q)&\textstyle =d(u)x+ud(x)+2d(g)=-gx-x-\frac12ux^2=-\frac12xq=-\frac12q^2t^{-1},\\
d(t)&\textstyle =d(q)x^{-1}-qx^{-2}d(x)=0.\end{align*}
Then the change of variable $p:=-2q^{-2}ty$ leads to the commutation relations
$pz-zp=pt-tp=0$ and $pq-qp=1$.\end{proof}

\begin{lemma}\label{step2} The following subalgebras of $\D'$ are equal:
\begin{equation*}
\ku[z^{\pm 1},q^{\pm 1},t^{\pm 1}][y\,;\,d][\zeta\,;\,\delta]=\ku[z^{\pm 1},q^{\pm 1},t^{\pm 1}][p\,;\,\partial_q][\zeta\,;\,-t\partial_t].
\end{equation*}
\end{lemma}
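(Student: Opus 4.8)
The plan is to track how the derivation $\delta$ (which on $\Nuc'[y\,;\,d]$ extends $d$ and sends $y\mapsto y$) behaves once we pass to the new generators $z,q,t,p$. First I would record the action of $\delta$ on the base ring $\Nuc'=\ku[z^{\pm 1},q^{\pm 1},t^{\pm 1}]$: from \eqref{eq:valuesD} we have $\delta(g)=0$, $\delta(u)=-u$, $\delta(x)=x$; hence $\delta(q)=\delta(u)x+u\delta(x)+2\delta(g)=-ux+ux=0$, so $\delta(q)=0$, and consequently $\delta(z)=\delta(q^2g^{-1})=0$, while $\delta(t)=\delta(qx^{-1})=\delta(q)x^{-1}-qx^{-2}\delta(x)=-qx^{-1}=-t$. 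Thus on $\Nuc'$ the derivation $\delta$ is exactly $-t\partial_t$ (it kills $z$ and $q$ and sends $t\mapsto -t$).

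Next I would compute $\delta(p)$ where $p=-2q^{-2}ty$. Since $\delta$ is a derivation, $\delta(p)=-2\bigl(\delta(q^{-2})ty+q^{-2}\delta(t)y+q^{-2}t\delta(y)\bigr)$; using $\delta(q)=0$, $\delta(t)=-t$, $\delta(y)=y$ the middle and last terms cancel and the first vanishes, so $\delta(p)=0$. Therefore, on the subalgebra $\ku[z^{\pm 1},q^{\pm 1},t^{\pm 1}][p\,;\,\partial_q]$ — which by Lemma \ref{step1} is all of $\ku[z^{\pm 1},q^{\pm 1},t^{\pm 1}][y\,;\,d]$ — the derivation $\delta$ agrees with the derivation $-t\partial_t$ extended by $0$ on $p$. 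That is precisely the statement that adjoining $\zeta$ with the relations $\zeta r-r\zeta=\delta(r)$ gives the Ore extension $\ku[z^{\pm 1},q^{\pm 1},t^{\pm 1}][p\,;\,\partial_q][\zeta\,;\,-t\partial_t]$, and since both sides are generated inside $\D'$ by the same set $\{z,q,t,p,\zeta\}$ with the same relations, the two subalgebras of $\D'$ coincide.

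Concretely, the steps in order are: (1) verify $\delta(q)=0$, $\delta(z)=0$, $\delta(t)=-t$ on $\Nuc'$; (2) conclude $\delta|_{\Nuc'}=-t\partial_t$ as a derivation of $\ku[z^{\pm1},q^{\pm1},t^{\pm1}]$; (3) compute $\delta(p)=0$ from the definition $p=-2q^{-2}ty$ and the known values $\delta(q)=0$, $\delta(t)=-t$, $\delta(y)=y$; (4) invoke Lemma \ref{step1} to identify the ring $\ku[z^{\pm1},q^{\pm1},t^{\pm1}][y\,;\,d]$ with $\ku[z^{\pm1},q^{\pm1},t^{\pm1}][p\,;\,\partial_q]$ and note that $\delta$ restricted to it is the derivation determined by $-t\partial_t$ on the base and $0$ on $p$; (5) since $\zeta$ satisfies $\zeta w - w\zeta = \delta(w)$ for $w$ in that ring, the Ore extension $[\zeta\,;\,\delta]$ is literally $[\zeta\,;\,-t\partial_t]$, which gives the claimed equality of subalgebras.

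The only mild subtlety — the "hard part", such as it is — is bookkeeping: one must be careful that $p$ and $q$ really do generate (together with $z^{\pm1},t^{\pm1}$) the ring $\ku[z^{\pm1},q^{\pm1},t^{\pm1}][y\,;\,d]$, so that checking $\delta$ on the generators $z,q,t,p$ suffices to pin down $\delta$ on the whole subring; this is exactly the content of Lemma \ref{step1}, so no new work is needed. Everything else is a short computation with the explicit formulas in \eqref{eq:valuesD}, and there is no obstruction of substance.
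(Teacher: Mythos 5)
Your proof is correct and follows essentially the same route as the paper: compute $\delta$ on the new generators and check $\delta(z)=\delta(q)=\delta(p)=0$, $\delta(t)=-t$, so that $[\zeta\,;\,\delta]=[\zeta\,;\,-t\partial_t]$ on $\ku[z^{\pm1},q^{\pm1},t^{\pm1}][p\,;\,\partial_q]$ (the paper merely orders the computations slightly differently, getting $\delta(z)=0$ from centrality of $z$ and $\delta(q)$ via $q=tx$). One cosmetic slip: your parenthetical description of $\delta$ as ``extending $d$'' is inaccurate ($\delta(x)=x$ while $d(x)=-\tfrac12 x^2$), but your actual computations use the correct values from \eqref{eq:valuesD}, so nothing is affected.
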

\begin{proof}  We have $\delta(z)=0$ by Lemma \ref{lema:xqf}(iii). We compute using \eqref{eq:valuesD}:
\begin{align*}
\delta(t) &= \delta(u)+2(1+g)\delta(x^{-1})=-u+2(1+g)(-x^{-1})=-t,\\
\delta(q) &= \delta(t)x+t\delta(x)=-tx+tx=0,\\
\delta(p) &=-2\delta(q^{-2})ty-2q^{-2}\delta(t)y-2q^{-2}t\delta(y)=2q^{-2}ty-2q^{-2}ty=0,\end{align*}which gives the desired result.
\end{proof}

\begin{remark}The change of variable $s:=-t^{-1}\zeta$ leads to the commutation relations
$sz-zs=sq-qs=sp-ps=0$ and $st-ts=1$. 
Then denoting $T:=\ku[z^{\pm 1}]$, we have in $\D'$ the equality of subalgebras:
\begin{equation*}
\ku[z^{\pm 1},q^{\pm 1},t^{\pm 1}][y\,;\,d][\zeta\,;\,\delta]=\ku[z^{\pm 1}][q^{\pm 1}][p\,;\,\partial_q][t^{\pm 1}][s\,;\,\partial_t]\simeq A'_2(T).
\end{equation*}\end{remark}

\begin{lemma}With the change of variable $w:=t^{-1}v$, we have:
\begin{equation*}
\D'=\ku[z^{\pm 1},q^{\pm 1},\zeta][p\,;\,\partial_q][w\,;\,\derv][t^{\pm 1};\,\tau^{\pm 1}],
\end{equation*}
where $\derv$ is the derivation of $\ku[z^{\pm 1},q^{\pm 1},\zeta][p\,;\,\partial_q]$
such that:\begin{align}\derv(z)&=\derv(\zeta)=0,\\
\derv(q)&=\textstyle-1+\frac12q-z^{-1}q^2,\label{deltaq}\\ \derv(p)&=\textstyle-\frac12p+2qz^{-1}p+2z^{-1}\zeta+2q^{-2}-2z^{-1}\label{deltap},
\end{align}
and $\tau$ is the automorphism of $\ku[z^{\pm 1},q^{\pm 1},\zeta][p\,;\,\partial_q][w\,;\,\derv]$ such that:
\begin{align}
\tau(z)&=z,\quad \tau(q)=q,\quad \tau(p)=p,\\ \tau(\zeta)&=\zeta+1,\quad \tau(w)=\textstyle w+\frac12-2qz^{-1}\label{tauzeta}.\end{align}\end{lemma}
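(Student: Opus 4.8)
The plan is to move the Laurent variable $t^{\pm1}$ of \eqref{D'ore} to the outermost position and to carry out, on top of the substitution $y\mapsto p$ of Lemma~\ref{step1}, the further invertible substitution $v=tw$, i.e.\ $w:=t^{-1}v$. First I would combine Lemmas~\ref{step1} and~\ref{step2} (and their proofs, which also give that $\zeta$ commutes with $z$, $q$ and $p$, and that $d(t)=\delta(t)=0$) with the following facts: $z$ is central (Lemma~\ref{lema:xqf}(iii)); $t$ is a unit of $\Nuc'$ commuting with $z$, $q$ and $p$ (from $[t,y]=0$ one gets $[t,p]=0$ as $p=-2q^{-2}ty$); $t\zeta t^{-1}=\zeta+1$ (from $\delta(t)=-t$); and, reading off \eqref{eq:valuesD}, $\sigma(t)=t$ while $\sigma$ fixes $z,q,p$ and sends $\zeta\mapsto\zeta+1$. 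Writing $\mathcal S:=\ku[z^{\pm1},q^{\pm1},\zeta][p\,;\,\partial_q]$, this yields $\D'=\mathcal S[t^{\pm1};\tau_0^{\pm1}][v\,;\,\sigma,\deriv]$, where $\tau_0$ fixes $z,q,p$ and $\tau_0(\zeta)=\zeta+1$; note $\sigma|_{\mathcal S}=\tau_0$ and conjugation by $t$ also restricts to $\tau_0$ on $\mathcal S$.

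Next I would substitute $v=tw$. For $b\in\mathcal S$ the relation $vb=\sigma(b)v+\deriv(b)$ becomes $wb=bw+t^{-1}\deriv(b)$, because $t^{-1}\sigma(b)t=\tau_0^{-1}\sigma(b)=b$. Thus $\derv:=\ad_w|_{\mathcal S}$, which on generators is $b\mapsto t^{-1}\deriv(b)$, is the candidate derivation, and the heart of the argument is to verify that it lands in $\mathcal S$ with the prescribed values. One gets $\derv(z)=\derv(\zeta)=0$ at once. For $q$: a short computation from \eqref{eq:valuesD} gives $\deriv(q)=\tfrac12 uq$, and since $t^{-1}u=q^{-1}xu=q^{-1}ux=q^{-1}\bigl(q-2(1+g)\bigr)=1-2q^{-1}-2qz^{-1}$ (using $t^{-1}=q^{-1}x$, $xu=ux$, $g=z^{-1}q^2$ and $q^{-1}g=qz^{-1}$), one obtains $\derv(q)=t^{-1}\deriv(q)=\tfrac12(1-2q^{-1}-2qz^{-1})q=-1+\tfrac12 q-z^{-1}q^2$, i.e.\ \eqref{deltaq}. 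For $p$ one expands $\deriv(p)=[v,p]=-2[v,q^{-2}ty]$ by the Leibniz rule, using $\deriv(q^{-2})=-q^{-2}u$, $\deriv(y)=-g\zeta+yu$, the auxiliary identity $\deriv(t)=[v,t]=(2qz^{-1}-\tfrac12)t^2$ (obtained in turn from $\deriv(q)$ and $\deriv(x^{-1})$ via $t=qx^{-1}$), the relation $uy=yu+1-g$ from \eqref{eq:comm-u-yn}, and the identities $ty=-\tfrac12 q^2 p$ (a rearrangement of \eqref{eq:defp}) and $q^{-2}g=z^{-1}$; after multiplying by $t^{-1}$ every $t$ cancels and one is left exactly with $\derv(p)=-\tfrac12 p+2qz^{-1}p+2z^{-1}\zeta+2q^{-2}-2z^{-1}$, i.e.\ \eqref{deltap}. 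Since $\mathcal S$ is generated as an algebra by $z^{\pm1},q^{\pm1},\zeta,p$ and the derivation $\ad_w$ of $\D'$ maps each of these into $\mathcal S$, it preserves $\mathcal S$, so $\mathcal S':=\mathcal S[w\,;\,\derv]$ is an honest Ore extension inside $\D'$.

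Finally I would treat the outermost $t^{\pm1}$: conjugation by $t$ fixes $z,q,p$, sends $\zeta\mapsto\zeta+1$, and $twt^{-1}=vt^{-1}=w+\tfrac12-2qz^{-1}$ — the last equality from $vt^{-1}=vq^{-1}x=q^{-1}(v-\tfrac12 u)x=q^{-1}\bigl(xv+(1-g)+\tfrac12 xu\bigr)$ (using \eqref{qnormal} and $vx=xv+(1-g)+xu$) together with $q^{-1}xv=t^{-1}v=w$, $\tfrac12 q^{-1}xu=\tfrac12(1-2q^{-1}-2qz^{-1})$ and $q^{-1}(1-g)=q^{-1}-qz^{-1}$, the $q^{-1}$-terms cancelling. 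Hence conjugation by $t$ and by $t^{-1}$ both carry the generating set $\{z^{\pm1},q^{\pm1},\zeta,p,w\}$ of $\mathcal S'$ into $\mathcal S'$, so they restrict to mutually inverse automorphisms; writing $\tau$ for the first, its values are precisely \eqref{tauzeta}. Since $x=t^{-1}q$, $g=z^{-1}q^2$, $u=t-2(1+z^{-1}q^2)q^{-1}t$, $y=-\tfrac12 q^2 t^{-1}p$ and $v=tw$, the set $\{z^{\pm1},q^{\pm1},\zeta,p,w,t^{\pm1}\}$ generates $\D'$; and since each change of variable is invertible over the relevant base ring, the resulting surjection $\mathcal S'[t^{\pm1};\tau^{\pm1}]\to\D'$ is injective, hence an isomorphism. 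The main obstacle is the bookkeeping in the computation of $\derv(p)$, where several relations of Definition~\ref{def:double-jordan} and the substitutions $g=z^{-1}q^2$, $ty=-\tfrac12 q^2 p$ and $t^{-1}u=1-2q^{-1}-2qz^{-1}$ all have to be used at once, making coefficient and sign slips easy.
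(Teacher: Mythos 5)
Your proposal is correct and follows essentially the same route as the paper: you compute the commutation relations of $v$ (equivalently of $w=t^{-1}v$) with $z,q,\zeta,p,t$ directly from \eqref{eq:valuesD} and the identities $t=qx^{-1}$, $g=z^{-1}q^2$, $ty=-\tfrac12 q^2p$, and then reorder the iterated Ore extension so that $t^{\pm1}$ sits outermost; your values $\deriv(q)=\tfrac12 uq$, $\deriv(t)=(2qz^{-1}-\tfrac12)t^2$, $t^{-1}u=1-2q^{-1}-2qz^{-1}$ and the resulting \eqref{deltaq}, \eqref{deltap}, \eqref{tauzeta} all check out against the relations $vq$, $vp$, $vt$ computed in the paper's proof. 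The only cosmetic difference is that you organize the calculation as $\derv=\ad_w=t^{-1}\deriv$ on generators (and add an explicit freeness/injectivity remark), whereas the paper first writes the $v$-relations and then divides by $t$.
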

\begin{proof}
We start with the description \eqref{D'ore} of $\D'$ and recall Lemma \ref{step2}. 
By direct calculations using \eqref{eq:valuesD}, we show that:
\begin{align*}
vz&=\textstyle zv,\quad v\zeta=(\zeta +1)v,\quad vt=tv -\frac12t^2+2qz^{-1}t^2, \\ 
vq&=\textstyle qv-t+\frac12tq-tz^{-1}q^2,\\
vp&=\textstyle pv-\frac12tp+2tz^{-1}qp+2tz^{-1}\zeta+2tq^{-2}-2tz^{-1}.\end{align*}
We replace in $\D'=\Nuc'[p\,;\,\partial_q][\zeta\,;\,-t\partial_t][v\,;\,\sigma, \deriv]$ the generator $v$ by:
\begin{equation}\label{defw}
w:=t^{-1}v.  
\end{equation}
The last two of the above relations become:
\begin{align*}
wq&=\textstyle qw-1+\frac12q-z^{-1}q^2,\\
wp&=\textstyle pw-\frac12p+2qz^{-1}p+2z^{-1}\zeta+2q^{-2}-2z^{-1}.\end{align*}
We still have $wz=zw$.
We deduce from relations $\zeta t=t\zeta-t$ and $\zeta v=v\zeta -v$ that $w\zeta=\zeta w$.
Finally the relation $wt=tw -\frac12t+2qz^{-1}t$ can be rewritten as $tw=(w+\frac12-2qz^{-1})t$, 
which gives rise to the desired description of $\D'$.\end{proof}

Next we introduce the element
\begin{align}
z'&:=\textstyle q^{-1}\left[xv+uy+(-\frac12ux+g-1)\zeta -2(1+g)\right]\label{eq:expz'}\\
&=\textstyle \left[xv+uy+(-\frac12ux+g-1)\zeta -2(1+g)\right]q^{-1}\label{eq:expz'bis}
\end{align}

\begin{theorem}\label{thm:D'}
The algebra $\D'$ is isomorphic to the localized Weyl algebra $A'_2(S)$, with center $S:=\ku[z^{\pm 1},z']$.
\end{theorem}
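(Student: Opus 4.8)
The plan is to exhibit $\D'$ explicitly as $A'_2(S)$ by producing, starting from the last Ore-extension presentation
$$\D'=\ku[z^{\pm 1},q^{\pm 1},\zeta][p\,;\,\partial_q][w\,;\,\derv][t^{\pm 1};\,\tau^{\pm 1}],$$
a new system of generators whose commutation relations are manifestly those of a localized Weyl algebra of rank $2$ over the centre $S=\ku[z^{\pm 1},z']$. The pair $(q,p)$ already satisfies $pq-qp=1$ and $p,q$ commute with $z$; what remains is to straighten out the "second Weyl pair'' hidden in $\zeta,w,t$. Guided by the Remark preceding the theorem, where $s:=-t^{-1}\zeta$ and $t$ form a Weyl pair in the sub-Ore-extension (before adjoining $v$), the natural move is to keep $t^{\pm 1}$ as the second invertible variable and to replace $w$ (or equivalently $\zeta$) by a corrected element $p'$ so that $p' t - t p' = 1$ while $p'$ now commutes with $q$ and $p$. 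Since $\tau(w)=w+\tfrac12-2qz^{-1}$, the element $tw$ behaves like $qp$ does for $\sigma$, and after subtracting the appropriate multiple of $\zeta$ one lands on an element that, together with $t$, generates the second $A'_1$ factor over $R=\ku[z^{\pm 1},t^{\pm 1}]$, exactly as in \eqref{eq:weyl}.

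Concretely I would proceed as follows. First, record that $z$ is central in $\D$ by Lemma \ref{lema:xqf}\ref{item:z-central}, so $\ku[z^{\pm 1}]\subseteq \mathcal Z(\D')$ and everything below takes place over the base ring $\ku[z^{\pm 1}]$. Second, verify by the commutation relations already displayed (for $wq$, $wp$, $w\zeta$, $wt$, together with $\zeta t = t\zeta - t$, $\zeta q=q\zeta$, $\zeta p = p\zeta$) that the element $z'$ defined in \eqref{eq:expz'} is central: it suffices to check that $z'$ commutes with the generators $q,p,t,\zeta,w$ (equivalently $x,u,g,y,\zeta,v$), which is a finite computation one can also do directly in $\D$ using \eqref{eq:valuesD}, and which explains the two displayed forms \eqref{eq:expz'}--\eqref{eq:expz'bis} (left versus right multiplication by $q^{-1}$ agree precisely because $q$ is normal and $z'$ is central). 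Third, show $z$ and $z'$ are algebraically independent over $\ku$ — e.g. by passing to $\gr \D \simeq \ku[X_1,\dots,X_5,T^{\pm 1}]$ or by evaluating on suitable modules — so that $S=\ku[z^{\pm 1},z']$ is a Laurent polynomial ring in two independent variables and $\mathcal Z(\D')\supseteq S$. Fourth, and this is the heart, produce the second Weyl pair: set (up to an explicit invertible/scalar normalisation) $p':=$ the $\zeta$-corrected version of $tw$ suggested above, and check
$$p'q-qp'=0,\qquad p'p-pp'=0,\qquad p'z-zp'=0,\qquad p'z'-z'p'=0,\qquad p't-tp'=1,$$
so that $\ku\langle t^{\pm1},p'\rangle\simeq A'_1(\ku[z^{\pm1},z'])$ and it commutes with $\ku\langle q^{\pm1},p\rangle\simeq A'_1(\ku[z^{\pm1},z'])$. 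Fifth, conclude that the subalgebra generated by $q^{\pm1},p,t^{\pm1},p'$ over $S$ is a homomorphic image of $A'_2(S)$; since both are domains of the same Gelfand--Kirillov dimension (or, more cleanly, since one checks the new generators still generate all of $\D'$ — $v=tw$, $\zeta$, $y$ are recovered from $p,p',t,q,z,z'$ — and the natural surjection $A'_2(S)\to\D'$ is injective because $A'_2(S)$ is simple as an algebra over its centre once $z,z'$ are inverted appropriately, or by a filtration/PBW dimension count), the map is an isomorphism, and its centre is exactly $S$ by the description \eqref{eq:weyl} of $A'_n(R)$ together with the fact that the centre of $A'_1$ over a commutative ring is that ring.

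The main obstacle will be the fourth step: identifying the correct element $p'$ and verifying that all four "commuting'' relations hold simultaneously. The relations $\derv(q)$, $\derv(p)$ and $\tau(w)$ are somewhat involved (they contain the terms $-1+\tfrac12 q-z^{-1}q^2$, etc.), so the $\zeta$-correction needed to cancel the cross-terms $p'q-qp'$ and $p'p-pp'$ is not obvious a priori and must be reverse-engineered; likewise one must be careful that the $q^{-1}$'s and $z^{-1}$'s appearing there do not obstruct normality. Once $p'$ is pinned down, checking $p't-tp'=1$ and centrality of $z'$ are routine but bookkeeping-heavy; I would organise these by first doing everything in the four-variable localized algebra $\ku[z^{\pm1},q^{\pm1},\zeta][p\,;\,\partial_q][w\,;\,\derv]$ and only at the end adjoining $t^{\pm1}$ via $\tau$. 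A secondary, minor obstacle is making the final surjective-implies-bijective argument rigorous without circularity; the cleanest route is a graded/filtered dimension count showing that $q^{\pm1},p,t^{\pm1},p'$ together with $S$ span a space of the same ``size'' as the PBW basis $B$ localized at $x,q$, so no relations beyond those of $A'_2(S)$ can hold.
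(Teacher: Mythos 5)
Your overall architecture --- change variables in the Ore presentation \eqref{D'ore} until the relations of a rank-two localized Weyl algebra over $S=\ku[z^{\pm1},z']$ become manifest --- is also the paper's, and your auxiliary steps (checking centrality of the explicit element \eqref{eq:expz'} by a finite computation, algebraic independence of $z,z'$, and an injectivity argument for the resulting surjection $A'_2(S)\to\D'$) are routine or repairable. But the step you yourself call the heart is not carried out, and the ansatz you propose for it provably cannot work. In the subalgebra $\ku[z^{\pm 1},q^{\pm 1},\zeta][p\,;\,\partial_q]$ the element $\zeta$ is central (Lemma \ref{step2} gives $\delta(q)=\delta(p)=0$), so adding to $tw$ (or to $w$) any polynomial in $\zeta$ and $z^{\pm1}$ changes neither the commutator with $q$ nor the one with $p$; since $wq-qw=-1+\tfrac12 q-z^{-1}q^2\neq 0$ and $t$ commutes with $q$, no ``$\zeta$-corrected version of $tw$'' can commute with $q$. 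The guiding analogy is also off: conjugation by $t$ sends $tw$ to $tw+\tfrac12 t-2z^{-1}qt$, not to $tw\pm1$, so $tw$ is not the analogue of $qp$ in \eqref{eq:weyl}; the element with that behaviour is $-\zeta$, i.e.\ the second Weyl pair is $(t,-t^{-1}\zeta)$, already visible in the Remark before the theorem. What must be corrected is $w$, and it must be corrected into a \emph{central} element, not into a Weyl partner of $t$.

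The missing idea that makes this correction findable, rather than ``reverse-engineered'', is the paper's use of \cite[Lemma 4.6.8]{dix}: since $\ku[z^{\pm 1},q^{\pm 1},\zeta][p\,;\,\partial_q]\simeq A'_1(\ku[z^{\pm1},\zeta])$, every derivation of it is inner, so there exists $f$ with $\derv=\ad_f$; solving \eqref{deltaq}--\eqref{deltap} gives $f=-(1-\tfrac12q+z^{-1}q^2)p+2q^{-1}-2z^{-1}(\zeta-1)q$, which necessarily contains $p$ and $q^{-1}$ --- exactly the terms a $\zeta$-correction cannot supply (in the original generators these are the summands $uy$ and $2(1+g)$ in \eqref{eq:expz'}). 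Then $w-f$ centralizes the subalgebra, $\tau(w-f)=w-f+\tfrac12$, hence $z'=w-f-\tfrac12\zeta$ is central; replacing $w$ by $z'$ and $\zeta$ by $\xi=-t^{-1}\zeta$ exhibits $\D'$ in the form \eqref{eq:weyl}, and $S=\mathcal Z(\D')$ since $\car\ku=0$. Your fallback route --- take the explicit $z'$ of \eqref{eq:expz'}, verify centrality by hand, use $(t,-t^{-1}\zeta)$ as the second pair, and get injectivity of $A'_2(S)\to\D'$ from the fact that in characteristic $0$ a nonzero two-sided ideal of a Weyl algebra over a commutative domain meets the coefficient ring (your phrase ``simple over its centre'' needs this correction, since $S$ is not a field) --- is viable, but without either the explicit $f$ or the explicit centrality computation the proof is not complete as written.
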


In particular, $z'$ is central in $\D'$.

\begin{proof}
Since the subalgebra  $\ku[z^{\pm 1},q^{\pm 1},\zeta][p\,;\,\partial_q]$ is isomorphic to the localized Weyl algebra $A'_1(S)$ for $S=\ku[z^{\pm 1},\zeta]$, it is natural
by  \cite[Lemma 4.6.8]{dix} to look for an element $f\in \ku[z^{\pm 1},q^{\pm 1},\zeta][p\,;\,\partial_q]$ such that $\derv$ is the inner derivation
$\ad_f$. By \eqref{deltaq} and \eqref{deltap}, such an element satifies:
\begin{align*}
fq-qf&=\textstyle -1+\frac12q-z^{-1}q^2,\\
fp-pf&=\textstyle (-\frac12+2z^{-1}q)p+2q^{-2}+2z^{-1}(\zeta-1).
\end{align*}
A solution is clearly:\begin{equation}\label{deff}f:=\textstyle -(1-\frac12q+z^{-1}q^2)p+2q^{-1}-2z^{-1}(\zeta-1)q.\end{equation}
Then we have by construction for any $h\in\ku[z^{\pm 1},q^{\pm 1},\zeta][p\,;\,\partial_q]$:
\begin{equation*}
(w-f)h=wh-fh=hw+\derv(h)-fh=hw+fh-hf-fh=h(w-f). 
\end{equation*}Moreover, we deduce from $\tau(\zeta)=\zeta+1$ that $\tau(f)=f-2z^{-1}q$. Then the second identity of \eqref{tauzeta} implies that $\tau(w-f)=w-f+\frac12$.
A first consequence is that the element:
\begin{equation}\label{defz'}
z':=\textstyle w-f-\frac12\zeta 
\end{equation}is central in $\D'$. We can replace the generator $w$ by $z'$ to obtain:
\begin{equation}
\D'=\ku[z^{\pm 1},q^{\pm 1},\zeta][p\,;\,\partial_q][z'][t^{\pm 1};\,\tau^{\pm 1}],
\end{equation}
where all generators pairewise commute except:
\begin{equation}
pq-qp=1\quad\text{and}\quad t\zeta-\zeta t=t.
\end{equation}
We can replace the generator $\zeta$ by:
\begin{equation}
\xi:=-t^{-1}\zeta 
\end{equation}to obtain the following differential description:
\begin{equation}\label{D'weyldiff}
\D'=\ku[z^{\pm 1},z'][q^{\pm 1}][p\,;\,\partial_q][t^{\pm 1}][\xi\,;\,\partial_t],
\end{equation}with 
\begin{equation}pq-qp=1 \quad \text{and} \quad \xi t-t\xi=1.\end{equation} 
We can alternatively replace the generator $p$ by:
\begin{equation}
r:=-qp 
\end{equation}to obtain the following automorphic description:
\begin{equation}\label{D'weylaut}
\D'=\ku[z^{\pm 1},z'][r][q^{\pm 1}\,;\,\sigma^{\pm 1}][\zeta][t^{\pm 1};\,\tau^{\pm 1}]  
\end{equation}with 
\begin{equation}qr=(r+1)q \quad \text{and} \quad t\zeta=(\zeta+1)t.\end{equation}
We conclude from \eqref{D'weyldiff} or \eqref{D'weylaut} that $\D'$ is isomorphic to the
localized Weyl algebra $A'_2(S)$ as in \eqref{eq:weyl} for $S=\ku[z^{\pm 1},z']$.
Since $\car \ku = 0$, $S = \mathcal Z \left(A'_2(S)\right)$.

The last step is to express the central element $z'$  according to the initial generators of $\D$.
Let us remind that $q=ux-2(1+g)$, $z=q^2g^{-1}$ and $p=-2q^{-1}x^{-1}y$. It follows that 
the expression $(1-\frac12q+z^{-1}q^2)p$ in formula \eqref{deff} is equal to $-q^{-1}uy$. Then:
$f=q^{-1}[-uy+2(1+g)-2g\zeta]$. 
Moreover $w=q^{-1}xv$ by \eqref{eq:deft} and \eqref{defw}, and we obtain obviously the relation \eqref{eq:expz'}.
The alternative expression \eqref{eq:expz'bis} follows then from \eqref{qnormal}
\end{proof}

\begin{remark} Observe in \eqref{eq:expz'}  that  $z'$ does not depend on negative powers of $x$; i.~e. $z$ lies 
in the localization of $\D$ by inverting only the powers of $q$.
\end{remark}

\subsection{The center of \texorpdfstring{$\D$}{}}\label{subsec:center}

Because of Theorem \ref{thm:D'}, it is natural to introduce
\begin{equation*}
\textstyle s:=xv+uy+(-\frac12ux+g-1)\zeta -2(1+g)\ \in \ \Nuc v\oplus\Nuc y\oplus \Nuc \zeta\oplus \Nuc,
\end{equation*}
which is normal in $\D$ with associated inner automorphism $\gamma_s=\gamma_q$.\medskip

Since $z=q^2g^{-1}$ is central in $\D$, we are lead to introduce:
\begin{align}
\theta:=s^2g^{-1}\in \mathcal Z(\D).\end{align}
Now $z'=q^{-1}s=sq^{-1}$ is central in $\D'$ by Theorem \ref{thm:D'}, hence 
\begin{align}\label{eq:def-omega}
\omega:=zz'\in \mathcal Z(\D).
\end{align} 
In other words, $\omega=qg^{-1}s$ is central in $\D$, with $\omega\in\Nuc v\oplus\Nuc y\oplus \Nuc \zeta\oplus \Nuc$.
The three elements $z,\theta,\omega$ are not algebraically independent, since 
\begin{equation}\label{eq:relation-central}
z\theta=\omega^2.
\end{equation}

\begin{theorem}\label{prop:center-D} The center of $\D$ is the commutative subalgebra generated by  $z$, $\omega$ and $\theta$, which is
isomorphic to the quotient $\ku[X,Y,Z]/(XZ-Y^2)$.
\end{theorem}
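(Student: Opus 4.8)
The plan is to leverage Theorem \ref{thm:D'} to pin down $\mathcal Z(\D)$ via the central subalgebra $S=\ku[z^{\pm 1},z']$ of the localization $\D'$. First I would observe that $\mathcal Z(\D) \subseteq \mathcal Z(\D')$ because $\D \hookrightarrow \D'$ is a localization at a (left and right) Ore set generated by the normal element $q$ and the central-up-to-normality element $x$ (both normal by Lemma \ref{lema:xqf}); any central element of $\D$ remains central after localizing. By Theorem \ref{thm:D'}, $\mathcal Z(\D') = S = \ku[z^{\pm 1},z']$, so $\mathcal Z(\D) = \D \cap \ku[z^{\pm 1},z']$. Thus the whole problem reduces to identifying exactly which elements of $\ku[z^{\pm 1},z']$ already lie in $\D$ (i.e. require no negative powers of $q$ or $x$).

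Next I would use the explicit formulas: $z=q^2g^{-1}\in\Nuc\subseteq\D$, $z'=q^{-1}s=sq^{-1}$ with $s\in\Nuc v\oplus\Nuc y\oplus\Nuc\zeta\oplus\Nuc\subseteq\D$, and the derived elements $\theta=s^2g^{-1}$, $\omega=qg^{-1}s$, all of which lie in $\D$ and are central by \eqref{eq:def-omega} and the remark preceding it (using $z\theta=\omega^2$). So the subalgebra $\ku\langle z,\omega,\theta\rangle$ is a commutative central subalgebra of $\D$. To show it is \emph{all} of $\mathcal Z(\D)$, take an arbitrary $c\in\D\cap\ku[z^{\pm1},z']$ and write it as a Laurent polynomial $c=\sum_{i\in\Z,\,j\ge0} \lambda_{ij}\,z^i z'^j$. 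Since $z'=sq^{-1}$ and $z=q^2g^{-1}$, we have $z^i z'^j = z^i s^j q^{-j} = (s^j q^{-j}) z^i$, and $z'^2 = q^{-2}s^2 = g^{-1}s^2 \cdot (q^2 g^{-1})^{-1}\cdot \ldots$; more cleanly, note $z z'^2 = q^2 g^{-1}\cdot q^{-2}s^2 = g^{-1}s^2 = \theta$, so $z'^2 = \theta z^{-1}$ and $z z' = \omega$. Hence modulo the relation, every monomial $z^i z'^j$ with $j\ge 2$ can be rewritten, using $z'^2=\theta z^{-1}$, as a $\ku[z^{\pm1}]$-combination of $1$ and $z'$; so $c = P(z^{\pm1}) + Q(z^{\pm1})\,z'$ for Laurent polynomials $P,Q$. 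Now I must show that $c\in\D$ forces $P\in\ku[z]$ (no negative powers) — and similarly control $Q$ and the $z'$-part — so that $c$ lies in $\ku[z,\omega,\theta]$ (since $\omega=zz'$, $\theta=z^2\cdot z'^2 \cdot z^{-1}$... writing things back in terms of $z,\omega,\theta$).

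The main obstacle — and where the real argument lies — is showing that membership in $\D$ rules out genuine negative powers of $z$ (equivalently of $q$ and $g$). I would handle this with the PBW basis $B$ and the $\Z$-grading $\D=\oplus\D^{[n]}$ of \eqref{eq:D-grading}: note $g$ has degree $0$ and $q=ux+2(1+g)$ has degree $0$, so $z,\theta,\omega$ all sit in $\D^{[0]}$, and one checks $\D^{[0]}=\ku\langle g^{\pm1},x u,\zeta, \dots\rangle$ explicitly. Inverting $q$ and $x$ inside $\D^{[0]}$: the key point is that $q^{-1}=x^{-1}t^{-1}=x^{-1}(q^{-1}x)$... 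I instead argue directly that $\D\cap\ku[q^{\pm1},g^{\pm1}] = \ku[q^{\pm1},g^{\pm1}]\cap\Nuc$, and since $\Nuc=\ku[g^{\pm1},x,u]$ contains $q=ux+2(1+g)$ but not $q^{-1}$ (a degree/PBW argument: $q^{-1}$ would have to be a polynomial in $g^{\pm1},x,u$, impossible by looking at the $u,x$-degree), the only Laurent polynomials in $z=q^2g^{-1}$ lying in $\D$ are the honest polynomials $\ku[z]$; combined with the relation $z\theta=\omega^2$, this yields $\mathcal Z(\D)=\ku[z,\omega,\theta]$. Finally, to get the isomorphism $\ku[X,Y,Z]/(XZ-Y^2)\xrightarrow{\ \sim\ }\mathcal Z(\D)$, $X\mapsto z$, $Y\mapsto\omega$, $Z\mapsto\theta$: surjectivity is clear, and injectivity follows because $\ku[z,\omega,\theta]\subseteq S=\ku[z^{\pm1},z']$ has Gelfand–Kirillov dimension $2$ (it contains the algebraically independent pair $z,z'^2=\theta z^{-1}$ after localizing), while $\ku[X,Y,Z]/(XZ-Y^2)$ is a $2$-dimensional domain, so any surjection between them with both sides domains of the same dimension is an isomorphism — alternatively, one checks directly that $z$ and $\omega$ are algebraically independent in $\D$ and that $\theta=\omega^2 z^{-1}$ forces the ideal of relations to be exactly $(XZ-Y^2)$.
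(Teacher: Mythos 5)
Your opening reduction is the same as the paper's: $\mathcal Z(\D)=\mathcal Z(\D')\cap\D$ and, by Theorem \ref{thm:D'}, $\mathcal Z(\D')=\ku[z^{\pm 1},z']$, so everything comes down to deciding which elements of $\ku[z^{\pm 1},z']$ lie in $\D$. The closing argument (a surjection of two-dimensional affine domains $\ku[X,Y,Z]/(XZ-Y^2)\to\ku[z,\omega,\theta]$ is injective) is also fine. The problem is the middle, which is where the actual content of the theorem sits, and there your argument has a genuine gap. First, the rewriting step is false: $z$ and $z'$ are algebraically independent ($S=\ku[z^{\pm1},z']$ is a polynomial ring in $z'$ over $\ku[z^{\pm1}]$), so a general central element of $\D'$ cannot be put in the form $P(z^{\pm1})+Q(z^{\pm1})\,z'$; the identity $z'^2=\theta z^{-1}$ merely \emph{defines} $\theta$ inside $\ku[z^{\pm1},z']$ and does not lower the $z'$-degree of anything.

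Second, and more seriously, the only non-membership statement you actually argue is $\D\cap\ku[z^{\pm1}]=\ku[z]$ (via $q^{-1}\notin\Nuc$). That handles the $z'$-degree-zero part only. The heart of the theorem is the mixed monomials $z^{-i}\omega^j$ with $i\geq 1$, $j\geq 1$: for instance $\theta=z^{-1}\omega^2$ \emph{does} lie in $\D$, while $z'=z^{-1}\omega$ does not, and one must prove that $z^{-i}\omega^j\in\D$ exactly when $j\geq 2i$, and moreover that no $\ku$-linear combination of the excluded monomials conspires to land in $\D$. Your sentence ``combined with the relation $z\theta=\omega^2$, this yields $\mathcal Z(\D)=\ku[z,\omega,\theta]$'' is precisely the step that is missing. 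The paper closes it with a leading-term divisibility argument in the iterated Ore extension \eqref{eq:Dore}: $\omega^j=(g^{-j}q^jx^j)v^j+(\text{terms of lower }v\text{-degree})$, and since $z^i=q^{2i}g^{-i}$ is central in $\Nuc$, $z^i$ divides such an element in $\D$ if and only if $q^{2i}$ divides $q^jx^j$ in the polynomial ring $\Nuc=\ku[g^{\pm1},x,u]$, i.e.\ if and only if $j\geq 2i$, in which case $z^{-i}\omega^j=\theta^i\omega^{j-2i}$. You need this (or some substitute controlling the top $v$-degree coefficient) to rule out the monomials with $j<2i$; without it the proof does not go through.
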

\begin{proof} Clearly,  $\mathcal Z(\D) = \mathcal Z(\D') \cap \D$.
Since $\mathcal Z(\D')=\ku[z^{\pm 1},z'] =\ku[z^{\pm 1},\omega]$, we need to determine $\ku[z^{\pm 1},\omega]\cap \D$.
Since $\ku[z,\omega] \subset \mathcal Z(\D)$, 
we have to consider the $\ku$-linear combinations of monomials $z^{-i}\omega^j$ for positive $i$.
For any integer $j\geq 0$, it follows from the relations in the iterated Ore extension \eqref{eq:Dore} that 
$\omega^j$ is of the form 
$\omega^j=(g^{-j}q^jx^j)v^j+\cdots$ where the rest if of degree $\leq j-1$ in $\D$. We deduce that a power $z^i$ with $i\geq 1$ divides $\omega^j$ in $\D$
if and only if $z^i$ divides $g^{-j}q^jx^j$ in $\Nuc$, that is if and only if $j\geq 2i$. Then a monomial $m=z^{-i}\omega^j$ with $i\geq 1,j\geq 0$ is in $\D$ if and only if
$j\geq 2i$ and we have in this case $m=\theta^i\omega^{j-2i}$. This is sufficient to complete the proof.\end{proof}

The (spectrum of)  $\ku[X,Y,Z]/(XZ-Y^2)$ is the well-known Kleinian surface of type $A_1$, i.~e.
the algebra of invariants of the polynomial ring $\ku[x_1,x_2]$ under the action of the involution $x_1\mapsto -x_1,x_2\mapsto -x_2$.

\subsection{The skew field of fractions of \texorpdfstring{$\D$}{}}\label{subsec:GK}

Let $R$ be a  commutative $\ku$-algebra which is a domain. With the notations of \S \ref{subsec:localization-weyl}, the algebra $A_n(R)$ admits a skew field of fractions $\Frac(A_n(R))=: D_n(K)$ where $K$ is the field of fractions of $R$. 
We denote in particular $\Ds_{n,s}(\ku)= D_n(K)$ when $K$ is a purely transcendental extension $\ku(z_1,\ldots,z_t)$ of degree $t$. Following the seminal paper \cite{gk}, we say that a noncommutative $\ku$-algebra $A$ which is a noetherian domain satisfies the Gelfand-Kirillov property when its skew field of fractions $\Frac A$ is $\ku$-isomorphic to a Weyl skew field $\Ds_{n,s}(\ku)$ for some integers $n\geq 1, s\geq 0$. 

It is obvious that the Jordan plane $J$ satisfies the Gelfand-Kirillov property since $\Frac J\simeq \Ds_{1,0}(\ku)=D_1(\ku)$. This is also the case for the bosonization algebras $\D^{< 0}\# \ku \Gamma$ and  $U(\mathfrak h) \# \D^{> 0}$ because we can prove by easy technical  calculations that  $\Frac(\D^{< 0}\# \ku \Gamma)\simeq\Frac(U(\mathfrak h) \# \D^{> 0})\simeq\mathcal \Ds_{1,1}(\ku)$. Finally we deduce from the previous study that the algebra $\D$ itself satisfies the Gelfand-Kirillov property.

\begin{coro}\label{coro:GK} 
The skew field of fractions of $\D$ is $\ku$-isomorphic to the Weyl skew field $\Ds_{2,2}(\ku)$.
\end{coro}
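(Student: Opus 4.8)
The plan is to deduce Corollary \ref{coro:GK} directly from Theorem \ref{thm:D'}. By that theorem, $\D'$ is isomorphic to the localized Weyl algebra $A'_2(S)$ with $S = \ku[z^{\pm 1}, z']$. Since $\D$ is a noetherian domain (it has a filtration with $\gr \D \simeq \ku[X_1,\dots,X_5,T^{\pm1}]$), passing to a localization does not change the skew field of fractions, so $\Frac \D = \Frac \D' = \Frac A'_2(S)$. The first step is therefore to observe that $\Frac A'_2(S) = \Frac A_2(S)$, because $A'_2(S)$ is itself a localization of $A_2(S)$ at a multiplicative set (the powers of $q_1$ and $q_2$, in the notation of \eqref{eq:weyl}); a localization of a noetherian domain has the same skew field of fractions as the original.

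Next I would identify $\Frac A_2(S)$ with a Weyl skew field in the notation of \S\ref{subsec:GK}. By definition $\Frac A_2(S) = D_2(K)$ where $K = \Frac S = \Frac \ku[z^{\pm1}, z']$. Since $z$ and $z'$ are algebraically independent (this is part of the content of Theorem \ref{thm:D'}: $S$ is a polynomial ring in $z$ over the Laurent polynomials, or equivalently $z, z'$ generate a polynomial ring after inverting $z$), the fraction field $K = \ku(z, z')$ is a purely transcendental extension of $\ku$ of transcendence degree $2$. Hence $\Frac A_2(S) = D_2(\ku(z,z')) = \Ds_{2,2}(\ku)$ by the very definition of $\Ds_{n,s}(\ku)$ with $n = s = 2$. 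Combining the two steps gives $\Frac \D \simeq \Ds_{2,2}(\ku)$.

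There is essentially no obstacle here: the only point requiring a word of justification is that inverting elements of a noetherian domain leaves the Goldie quotient ring (the skew field of fractions) unchanged, which is standard (see \cite{MCR}); everything else is unwinding Theorem \ref{thm:D'} and the definitions of \S\ref{subsec:GK}. Concretely, the proof reads:

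\begin{proof}
The algebra $\D$ is a noetherian domain, and $\D'$ is obtained from $\D$ by inverting the powers of $x$ and $q$; hence $\Frac \D = \Frac \D'$. By Theorem \ref{thm:D'}, $\D' \simeq A'_2(S)$ with $S = \ku[z^{\pm 1}, z']$ and $z, z'$ algebraically independent. Since $A'_2(S)$ is a localization of $A_2(S)$ (inverting the powers of $q_1, q_2$ in \eqref{eq:weyl}) and $A_2(S)$ is a noetherian domain, we have $\Frac A'_2(S) = \Frac A_2(S) = D_2(K)$, where $K = \Frac S = \ku(z, z')$ is a purely transcendental extension of $\ku$ of degree $2$. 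By the definition in \S\ref{subsec:GK}, $D_2(\ku(z,z')) = \Ds_{2,2}(\ku)$. Therefore $\Frac \D \simeq \Ds_{2,2}(\ku)$.
\end{proof}
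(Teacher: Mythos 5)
Your proof is correct and follows essentially the same route as the paper: both pass from $\Frac\D$ to $\Frac\D'\simeq\Frac A'_2(S)=\Frac A_2(S)=D_2(K)$ with $K=\ku(z,z')$ and then invoke the definition of $\Ds_{2,2}(\ku)$. Your version merely spells out the (standard) justification that localization of a noetherian domain does not change the skew field of fractions, which the paper leaves implicit.
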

\begin{proof} 
By Theorem \ref{thm:D'}, 
we have $\Frac\D=\Frac A'_2(S)=\Frac A_2(S)=D_2(K)$ of center $K=\ku(z,z')$.\end{proof}


\begin{thebibliography}{ABFF}
%\bibitem[AAH]{aah-triang} N. Andruskiewitsch, I. Angiono and I. Heckenberger.
%\emph{On finite GK-dimensional Nichols algebras over abelian groups}. Mem. Amer. Math. Soc. Vol. \textbf{271}, No. 1329 (2021).

\bibitem[AAGTV]{aagtv} N. Andruskiewitsch, I. Angiono,  A. Garc\'ia Iglesias, B. Torrecillas, C. Vay. \emph{From Hopf algebras to tensor categories}, in
Conformal field theories and tensor categories. Bai, Chengming (ed.) et al.
Proceedings of a workshop held at Beijing International Center for Mathematical Research, Beijing, China, June 13--17, 2011. Heidelberg: Springer. 1--31 (2014)




\bibitem[ABFF]{abdf-super} N. Andruskiewitsch, D. Bagio, S. Della Flora, D. Flores. \emph{On the bosonization of the super Jordan plane}.  Sao Paulo J. Math. Sci. 13(1), 1-26 (2019).

\bibitem[AP]{ap} N. Andruskiewitsch and H. M. Pe\~na Pollastri, 
\emph{On the restricted Jordan plane in odd characteristic}. J. Algebra Appl. \textbf{20} (01), Article No. 2140012 (2021).


\bibitem[AST]{AST} M. Artin,  W. F. Schelter, J. Tate.
\emph{Quantum deformations of $GL_n$}.
Comm. Pure Appl. Math. \textbf{44} 879--895 (1991).

\bibitem[ASZ]{ASZ} M. Artin,  L. W. Small, J. J.  Zhang.
\emph{Generic flatness for strongly Noetherian algebras}. J. Algebra \textbf{221}  579--610 (1999).

\bibitem[CLW]{clw} C. Cibils, A. Lauve and S. Witherspoon. \emph{Hopf quivers and Nichols algebras in positive characteristic}. 
Proc. Amer. Math. Soc. \textbf{137} (12)  4029--4041 (2009).

\bibitem[CR]{CR} C. W. Curtis and I. Reiner.
\emph{Methods of representation theory with applications to finite groups and orders}. 
Volume 1. New York etc.: John Wiley \& Sons (1990).

\bibitem[D]{dix} J. Dixmier. 
\emph{Enveloping algebras}, Graduate Studies in Mathematics, {\bf 11}, American Math. Society, Providence, RI, {1996} (revised reprint of the 1977 translation).

\bibitem[GK]{gk} I. M. Gelfand and A. A. Kirillov. 
\emph{Sur les corps li\'es aux alg\`ebres enveloppantes des alg\`ebres de Lie}, 
Inst. Hautes \'Etudes Sci. Publ. Math \textbf{31} 5-19 (1966).

\bibitem[G]{G} D. I. Gurevich,  \emph{The Yang-Baxter equation and the generalization of formal Lie theory}.
Dokl. Akad. Nauk SSSR 288 (1986), no. 4, 797--801.

\bibitem[MCR]{MCR} J. C. McConnell and J. C. Robson
\emph{Noncommutative Noetherian Rings}. 
Chichester - New York: John Wiley \&| Sons (1987).

\bibitem[M]{montgomery} S. Montgomery. \emph{Hopf Algebras and Their Actions on Rings}. CBMS Regional Conference Series in Mathematics
Volume: 82; 1993.

\bibitem[R]{Rad-libro} D. E. Radford. \emph{Hopf algebras}, Series on Knots and Everything 49. 
Hackensack, NJ: World Scientific. xxii, 559 p.  (2012).


\bibitem[ZZ]{ZZ} J. J.  Zhang and J.   Zhang. 
\emph{Double extension regular algebras of type}. 
J. Algebra \textbf{322} 373--409 (2009).

\end{thebibliography}
\end{document}